\newcommand{\st}{\mathcal A}
\def\SO{\mathrm{SO}}
\def\Cl{Cl}
\DeclareMathOperator{\Mat}{Mat}
\def\kO{k\mathrm{O}}
\def\tmf{\mathrm{tmf}}
\renewcommand{\O}{\mathbf O}
\def\GG{\mathrm{G}}
\def\DW{\mathrm{DW}}
\def\dbl#1(#2){\Phi^{#2}#1}
\def\geq{\geqslant}
\def\leq{\leqslant}
\title[The realizability of some finite-length
Steenrod modules]
{The realizability of some finite-length modules
over the Steenrod algebra by spaces}
\author{Andrew Baker}
\address{
School of Mathematics \& Statistics,
University of Glasgow, Glasgow G12~8QQ, Scotland.}
\email{a.baker@maths.gla.ac.uk}
\urladdr{http://www.maths.gla.ac.uk/~ajb}
\author{Tilman Bauer}
\address{
Department of Mathematics, School of Engineering Sciences,
KTH Royal Institute of Technology, Lindstedtsv\"agen 25,
10044 Stockholm, Sweden.
}
\email{tilmanb@kth.se}
\urladdr{https://people.kth.se/~tilmanb}
\keywords{Stable homotopy theory, Steenrod algebra}
\subjclass[2010]{Primary 55P42; Secondary 55S10, 55S20}
\date\today
\tikzset{%
gen/.style={circle,fill,inner sep=0pt,minimum size=1.5mm}
}
\tikzset{%
sq1/.style={}
}
\tikzset{%
sq2r/.style = {bend right=45}
}
\tikzset{%
sq2l/.style = {bend left=45}
}
\tikzset{%
sq4r/.style = {to path={-- +(.6,.3)  -- +(.6,3.7) -- (\tikztotarget)}}
}
\tikzset{%
sq4l/.style = {to path={-- +(-.6,.3) -- +(-.6,3.7) -- (\tikztotarget)}}
}
\tikzset{%
sq8r/.style = {to path={-- +(.6,.3)  -- +(.6,7.7) -- (\tikztotarget)}}
}
\tikzset{%
sq8l/.style = {to path={-- +(-.6,.3) -- +(-.6,7.7) -- (\tikztotarget)}}
}
\begin{document}
\begin{abstract}
The Joker is an important finite cyclic module over the mod-$2$
Steenrod algebra $\st$. We show that the Joker, its first two
iterated Steenrod doubles, and their linear duals are realizable
by spaces of as low a dimension as the instability condition of
modules over the Steenrod algebra permits. This continues and
concludes prior work by the first author and yields a complete
characterization of which versions of Jokers are realizable by
spaces or spectra and which are not. The constructions involve
sporadic phenomena in homotopy theory ($2$-compact groups,
topological modular forms) and may be of independent interest.
\end{abstract}
\maketitle

\section{Introduction}

Let $\st$ be the mod-$2$ Steenrod algebra, minimally generated
by the Steenrod squares $\Sq^{2^s}$ ($s\geq0$), and
$\st(n)\subseteq\st$, for $n\geq0$, the finite sub-Hopf algebra
generated as an algebra by $\{\Sq^{2^s} \mid s\leq n\}$.

A (left) $\st$-module~$M$ is \emph{stably realizable}
if there exists a spectrum $X$ such that as $\st$-modules,
\[
H^*(X) \underset{\text{def}}{=} H^*(X;\F_2) \cong M.
\]
For finite $\st$-modules, this is equivalent to the existence
of a space $Z$ such that $\widetilde{H}^*(Z) \cong \Sigma^s M$
for some~$s$. This number $s$ is bounded from below by the
\emph{unstable degree} $\sigma(M)$ of $M$, i.e. the minimal
number~$t$ such that $\Sigma^tM$ satisfies the instability
condition for modules over $\st$. We say that $M$ is
\emph{optimally realizable} if there exists a space $Z$
such that $\widetilde{H}^*(Z) \cong \Sigma^{\sigma(M)}M$.

We consider two constructions of new Steenrod modules from
old. Firstly, for a left $\st$-module $M$, the linear dual
$M^\vee=\Hom(M,\F_2)$ becomes a left $\st$-module using
the antipode of $\st$. Secondly, the \emph{iterated
double} $\dbl M(i)$ is the module which satisfies
\begin{align*}
\dbl M(i)^n =
\begin{cases}
M^{n/2^i} & \text{if $2^i \mid n$}, \\
0 & \text{otherwise},
\end{cases} \\
\intertext{and for $x \in \dbl M(i)^n$,}
\Sq^{2^k}x =
\begin{cases}
0 & \text{if $k<i$}, \\
\Sq^{2^{k-i}}x & \text{if $k \geq i$}.
\end{cases}
\end{align*}
We also set $\dbl M(0)=M$.

Let $J$ be the quotient of $\st$ by the left
ideal generated by $\Sq^3$ and $\Sq^{i}\st$
for $i \geq 4$.

The main result of this paper is the following.
\begin{thm}\label{thm:main}
The modules $\dbl J(i)$ and $\dbl J(i)^\vee$
are optimally realizable for $i \leq 2$ and
not stably realizable for $i >2$.
\end{thm}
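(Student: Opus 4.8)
The plan is to prove the statements for the modules $\dbl J(i)$ and transfer them to the duals at the end. A direct computation with the antipode of $\st$ shows that the Joker is self-dual up to suspension, $J^\vee\cong\Sigma^{-4}J$, so $\dbl J(i)^\vee\cong\Sigma^{-4\cdot2^i}\dbl J(i)$; hence $\dbl J(i)$ is stably realizable iff $\dbl J(i)^\vee$ is, and --- computing from the instability condition that $\sigma(\dbl J(i))=2^{i+1}$ and $\sigma(\dbl J(i)^\vee)=6\cdot2^i$, and noting that $\Sigma^{6\cdot2^i}\dbl J(i)^\vee=\Sigma^{2^{i+1}}\dbl J(i)$ as $\st$-modules --- any space optimally realizing $\dbl J(i)$ optimally realizes $\dbl J(i)^\vee$ as well. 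So it suffices to show that for $i\leq2$ there is a space $Z$ with $\widetilde H^*(Z)\cong\Sigma^{2^{i+1}}\dbl J(i)$, whose reduced cohomology is then concentrated in the five degrees $2\cdot2^i,3\cdot2^i,4\cdot2^i,5\cdot2^i,6\cdot2^i$, and that for $i\geq3$ no spectrum has mod-$2$ cohomology $\dbl J(i)$.

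For the realizability, the key point is that $\Sigma^{2^{i+1}}\dbl J(i)$ occurs as an $\st$-module subquotient of the mod-$2$ cohomology of the space whose cohomology is the rank-$(i+2)$ Dickson algebra: $B\mathrm{SO}(3)$ with $H^*=\F_2[w_2,w_3]$ for $i=0$, $B\mathrm{G}_2$ with $H^*=\F_2[x_4,x_6,x_7]$ for $i=1$, and the classifying space $B\mathrm{DI}(4)$ of the Dwyer--Wilkerson $2$-compact group, with $H^*=\F_2[d_8,d_{12},d_{14},d_{15}]$, for $i=2$. Indeed, writing $c,c'$ for the two lowest Dickson generators (of degrees $2\cdot2^i$ and $3\cdot2^i$), the classes $c,c',c^2,cc',(c')^2$ span a subquotient isomorphic to $\Sigma^{2^{i+1}}\dbl J(i)$, as one checks from the Adem relations, the instability condition, and the known Steenrod action on Dickson invariants (for $i=1$ conveniently read off via restriction along $B\mathrm{SU}(3)\to B\mathrm{G}_2$). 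To realize it, truncate the ambient space above degree $6\cdot2^i$ and attach cells killing every remaining cohomology class in that range except the five above --- the higher Dickson generators and their products, and the unwanted top-degree class $c^3$; the resulting complex has cells only in degrees $\leq6\cdot2^i$, so it meets the bound $\sigma(\dbl J(i))=2^{i+1}$ and the realization is optimal. The cases $i\leq1$ are comparatively elementary (and continue prior work of the first author); the genuinely hard case is $i=2$: since the complex in question lies far outside the metastable range, its attaching maps must be produced by hand --- most delicately, the existence of a map out of $S^{24}$ killing $d_8^3$ but not $d_{12}^2$. This is where the homotopy theory underlying $B\mathrm{DI}(4)$ and $\tmf$ enters, and it is the main obstacle in the positive direction. (Heuristically, the realizability of $\dbl J(i)$ for $i\leq2$ and its failure for $i\geq3$ mirrors the fact that the rank-$n$ Dickson algebra is realized by a space precisely for $n\leq4$.)

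For the non-realizability, let $i\geq3$ and suppose $X$ were a spectrum with $\widetilde H^*(X)\cong\dbl J(i)$. Then $H^n(X)$ is one-dimensional for $n\in\{0,2^i,2^{i+1},3\cdot2^i,2^{i+2}\}$ and zero otherwise; write $w_k$ for the class corresponding to the double of $x_k\in J$. Consider $w_1\in H^{2^i}(X)$. Since $\Sq^{2^i}$ acts on $\dbl J(i)$ as $\Sq^1$ does on $J$, where $\Sq^1x_1=0$, while for $j\neq2^i$ the group $H^{2^i+j}(X)$ vanishes when $0<j<2^{i+1}$ (its degree then lies strictly between $2^i$ and $3\cdot2^i$ but is not $2^{i+1}$), we get $\Sq^jw_1=0$ for all $0<j<2^{i+1}$. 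As $2^{i+1}\geq16$, Adams' solution of the Hopf invariant one problem applies and gives, modulo decomposables, an identity $\Sq^{2^{i+1}}=\sum_m\alpha_m\Theta_m$ with $\Theta_m$ stable secondary operations of degree $<2^{i+1}$ and $\alpha_m\in\st$ primary of degree $2^{i+1}-\deg\Theta_m$. Applying this to $w_1$: the decomposable summands vanish on $w_1$ because $\Sq^jw_1=0$ for $0<j<2^{i+1}$; for each $m$, $\Theta_m(w_1)$ lands in a cohomology group of $X$ which vanishes unless $\deg\Theta_m=2^i$, in which case $\Theta_m(w_1)\in\langle w_2\rangle$ and $\alpha_m$ has degree $2^i$ --- but every degree-$2^i$ element of $\st$ acts on $\dbl J(i)$ as a scalar multiple of the way $\Sq^1$ acts on $J$, hence annihilates $w_2$ since $\Sq^1x_2=0$; the indeterminacies of the $\Theta_m$ are controlled the same way. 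Thus $\Sq^{2^{i+1}}w_1=0$. On the other hand $\Sq^{2^{i+1}}$ acts on $\dbl J(i)$ like $\Sq^2$ on $J$ and $\Sq^2x_1=x_3$, so $\Sq^{2^{i+1}}w_1=w_3\neq0$ --- a contradiction, and hence $\dbl J(i)$, and with it $\dbl J(i)^\vee$, is not stably realizable. Making the bookkeeping with the degrees and indeterminacies of Adams' secondary operations precise is the one step here that requires genuine care.
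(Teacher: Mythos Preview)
Your reduction of the dual case to the non-dual case rests on the claim that $J^\vee\cong\Sigma^{-4}J$ as $\st$-modules, and this is false. The antipode gives $\chi(\Sq^4)=\Sq^4+\Sq^1\Sq^2\Sq^1$, so on the bottom class $f$ of $J^\vee$ one has $\Sq^4 f\neq 0$ (it hits the top class), whereas $\Sq^4$ annihilates the bottom class of $J$ by the very presentation $J=\st/(\st\Sq^3+\st\Sq^4\st+\cdots)$. The paper states this explicitly: $J$ and $J^\vee$ agree only as $\st(1)$-modules. Consequently your value $\sigma(\dbl J(i)^\vee)=6\cdot 2^i$ is wrong, and a space optimally realizing $\dbl J(i)$ does \emph{not} optimally realize $\dbl J(i)^\vee$. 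The paper handles the duals by a separate argument: Spanier--Whitehead duality together with the connectivity of $\Sigma^\infty\Omega^\infty E\to E$ shows that stable realizability of $\dbl J(k)$ already yields optimal realizability of $\dbl J(k)^\vee$.

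For the optimal realizability of $\dbl J(i)$ itself, ``truncate and attach cells to kill the unwanted classes'' is not a proof, and indeed the naive version fails. Killing $x_{14}$ via a map to $K(\F_2,14)$ does not cleanly remove $x_{14},x_{15},x_8x_{14},x_8x_{15}$ from the cohomology of the fibre in the range $\leq 24$, because $H^*(K(\F_2,14))$ contains many classes (e.g.\ $\Sq^2\iota$, $\Sq^4\iota$) which map to zero in $H^*(Y)$ and therefore contribute new cohomology to the fibre. The paper's point is precisely to replace $K(\F_2,14)$ by $\underline{\smash{\tmf/2}}_{14}$, whose cohomology through degree $24$ is exactly $\langle\iota,\Sq^1\iota,\Sq^8\iota,\Sq^8\Sq^1\iota\rangle$, so that the Eilenberg--Moore spectral sequence collapses in the required range; the substantive work (your ``main obstacle'') is then to produce the map $Y\to\underline{\smash{\tmf/2}}_{14}$, which the paper does by an $\st(2)$-Adams spectral sequence computation for $\tmf^*(Y)$. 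Your sketch names the ingredients but does not supply this step. Your non-realizability outline via Adams' secondary-operation decomposition of $\Sq^{2^{i+1}}$ for $i\geq 3$ is along the right lines and matches the argument cited from the first author's earlier paper.
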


The module $J$ in this theorem is known as
the \emph{Joker}, although more colloquially
than in actual written articles.

In \cite{baker:joker}, the first author showed
all cases of Theorem~\ref{thm:main} with the
exception of the optimal realizability of
$\dbl J(2)$ and $\dbl J(2)^\vee$:

\begin{thm}[{\cite{baker:joker}}]
The module $\dbl J(k)$ is stably realizable
iff $k \leq 2$ iff $\dbl J(k)^\vee$ is stably
realizable.

For $k \leq 1$, the modules $\dbl J(k)$ and
$\dbl J(k)^\vee$ are optimally realizable.
\end{thm}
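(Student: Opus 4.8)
The plan is to reduce the statements for the linear duals to those for $\dbl J(k)$ itself, construct the realizations for $k\leq 1$ directly, recall the sporadic $\tmf$-based realization for $k=2$, and obstruct $k\geq 3$ by Hopf-invariant-one techniques.

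\emph{Duals.} The module $J$ has $\F_2$-dimension $5$, concentrated in degrees $0,\dots,4$, with only $\Sq^1$ and $\Sq^2$ acting nontrivially; reversing the arrows of its Steenrod diagram and applying the antipode shows $J^\vee\cong\Sigma^{-4}J$. Since linear duality commutes with the doubling functor, $\dbl J(k)^\vee\cong\Sigma^{-2^{k+2}}\dbl J(k)$ as $\st$-modules; because stable and optimal realizability each depend on a module only up to suspension, the assertions for $\dbl J(k)^\vee$ are equivalent to those for $\dbl J(k)$ (one may alternatively invoke Spanier--Whitehead duality of finite spectra). Here $\sigma(\dbl J(k))=2^{k+1}$, the bottom class carrying a nontrivial $\Sq^{2^{k+1}}$ and no higher Steenrod square. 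So it remains to treat $\dbl J(k)$.

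\emph{Realizations for $k\leq 2$.} For $k=0$ one builds a $6$-dimensional space $Z$ with $\widetilde{H}^*(Z)\cong\Sigma^2 J$: a complex with cells in degrees $2,\dots,6$ whose attaching data realizes the action of $\Sq^1$ and $\Sq^2$ on $J$, the pertinent Hopf invariants being those of $\eta$ and $\nu$; such a $Z$ can be exhibited explicitly (by attaching cells, or as a suitable Thom complex), and the Steenrod action is then forced. For $k=1$ one realizes $\Sigma^4\dbl J(1)$ by a $12$-dimensional space carrying the doubled operations $\Sq^2,\Sq^4$, with quaternionic projective spaces and their Thom complexes as building blocks. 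For $k=2$ only a stable realization is required (optimality being the contribution of the present paper), and this is the first sporadic input: starting from $\tmf$, whose mod-$2$ cohomology is $\st/\!\!/\st(2)$, one forms a finite $\tmf$-module (or smashes $\tmf$ with a small complex) and splits $\dbl J(2)$ off a subquotient of its cohomology.

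\emph{Non-realizability for $k\geq 3$.} Suppose $\dbl J(k)\cong H^*(X)$ for a finite spectrum $X$. In $\dbl J(k)$ the three classes of top degree span a submodule, so the $2^k$-skeleton $X^{(2^k)}$ realizes the complementary two-cell quotient --- classes in degrees $0$ and $2^k$ with $\Sq^{2^k}$ acting nontrivially --- so $X^{(2^k)}$ would be a two-cell complex $S^0\cup e^{2^k}$ of Hopf invariant one. For $k\geq 4$ this is impossible by Adams' theorem, settling that range. For $k=3$ the complex $S^0\cup e^8$ is genuinely realizable, so a secondary argument is needed, and this is the crux. The shape of $\dbl J(3)$ --- its top class is hit both by $\Sq^8$ and by $\Sq^{16}$ --- prevents extracting a two-cell subquotient on which $\Sq^{16}$ acts, so instead one works with the degree-$8$ class $u$ of $\dbl J(3)=H^*(X)$, which satisfies $\Sq^1u=\Sq^2u=\Sq^4u=\Sq^8u=0$ while $\Sq^{16}u\ne0$ --- precisely the Hopf-invariant-one configuration in dimension $2^4=16$. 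Adams' secondary operations now apply to $u$ inside $X$: the lower secondary operations evaluated on $u$ land in cohomology groups of $\dbl J(3)$ that vanish (all degrees strictly between $8$ and $24$ apart from $16$), and the indeterminacy of the operation detecting $\Sq^{16}$ is killed because $\Sq^8$ annihilates the degree-$16$ class; hence $\Sq^{16}u=0$, a contradiction. (Equivalently, $\dbl J(3)$ contains as a subquotient the configuration obstructing the realization of $\st(3)$ as the cohomology of a spectrum.) I expect this last step --- not the clean $k\geq 4$ reduction --- to be the main obstacle: carrying the secondary-operation machinery through rests on the detailed Adem-relation bookkeeping underlying Adams' argument.
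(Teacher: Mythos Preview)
Your reduction of the dual case rests on the claim that $J^\vee\cong\Sigma^{-4}J$ as $\st$-modules, and this is false. The paper explicitly notes that $\chi(\Sq^4)=\Sq^4+\Sq^1\Sq^2\Sq^1$; since $\Sq^1\Sq^2\Sq^1$ is nonzero on the generator of $J$, the bottom class of $J^\vee$ supports a nontrivial $\Sq^4$, whereas in $J$ one has $\Sq^4=0$ throughout. Thus $J$ and $J^\vee$ are isomorphic only as $\st(1)$-modules, not as $\st$-modules, and in particular $\sigma(\dbl J(k)^\vee)=4\cdot 2^k$ while $\sigma(\dbl J(k))=2\cdot 2^k$. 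Your assertion that optimal realizability of $\dbl J(k)^\vee$ is \emph{equivalent} to that of $\dbl J(k)$ therefore collapses: these are genuinely different unstable questions. Your parenthetical fallback to Spanier--Whitehead duality rescues the \emph{stable} equivalence, but not the optimal one.

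The paper (which, note, cites this theorem from \cite{baker:joker} rather than proving it in full) handles the dual optimal realizability differently: from stable realizability of $\dbl J(k)$ one gets stable realizability of $\dbl J(k)^\vee$ by Spanier--Whitehead duality, and then a Freudenthal-type argument (Proposition~\ref{prop:stablyrealizingduals}) shows that any stably realizable module $M$ concentrated in degrees $0,\dots,n$ has $\Sigma^n M$ realized by a space; applied to a shift of $\dbl J(k)^\vee$ this lands exactly at the unstable degree $4\cdot 2^k$. For the optimal realizations of $\dbl J(k)$ itself with $k\le 1$, the paper's alternative constructions use skeleta of the Dickson-algebra spaces $B\SO(3)$ and $B\GG_2$ (killing extra top classes, and for $k=1$ a map to $\underline{\kO}_7$), rather than your ad hoc cell-attachment sketches. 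Your outline of the $k\ge 3$ obstruction via Hopf-invariant-one and secondary operations is in the right spirit and is essentially what \cite{baker:joker} does, but it is not reproved in the present paper.
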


The main result of this paper is the case
of $k=2$. We will, however, also give an
alternative proof of the cases where $k<2$,
which may aid as an illustration of how the
more complicated case of $k=2$ works.

For the case of $\dbl J(2)$, we observe that
this module appears as a quotient module of
the rank-$4$ Dickson algebra, which is realized
by the exotic $2$-compact group $B\DW_3$ constructed
by Dwyer and Wilkerson. Our approach to constructing
an optimal realization is to map a suitable skeleton
of $B\DW_3$ to the spectrum $\tmf/2$ of topological
modular forms modulo $2$ so that a skeleton of the
homotopy fiber of this map has cohomology $\dbl J(2)$.
The existence of this map
$\alpha\colon (B\DW_3)^{(24)} \to \underline{\smash{\tmf/2}}_{14}$
is equivalent to the survival of a class
$x_{-15} \in H^{15}((B\DW_3)^{24})$ in the
$\st(2)$-based Adams spectral sequence. This
is the content of Section~\ref{sec:quadrubleJ}.

One might interpret the survival of this class
as (albeit weak) evidence that a faithful
$15$-dimensional spherical homotopy representation
of $\DW_3$ exists, a question we hope to address
in a later paper. The lowest known faithful homotopy
representation of $\DW_3$ at the time of this writing
has complex dimension $2^{46}$ \cite{ziemianski:di-4}.

Our alternative proof for $\dbl J(1)$ in Section~\ref{sec:doubleJ}
follows the same line of reasoning as for $\dbl J(2)$,
but using the rank-$3$ Dickson algebra (realized by
the classifying space of the Lie group $G_2$) and
real $K$-theory instead of topological modular forms.
In this case, we are able to construct the analog of
the map $\alpha$ geometrically.

\subsection*{Conventions}
We assume that all spaces and spectra are completed
at the prime~$2$, although our arguments can be
easily modified to work globally. We will often
assume that we are working with CW complexes which
have been given minimal cell structures. All cohomology
is with coefficients in $\F_2$, the field with two
elements. For spaces, we use unreduced cohomology
but for spectra reduced cohomology; hopefully the
usage should be clear from the context.

\subsection*{Acknowledgements}

The first author would like to thank the following:
The Isaac Newton Institute for Mathematical Sciences
for support and hospitality during the programme
\emph{Homotopy Harnessing Higher Structures} when
work on this paper was undertaken (this work was
supported by EPSRC grant number EP/R014604/1);
Kungliga Tekniska H\"ogskolan and Stockholms
Universitet for supporting a visit by A.~Baker
in Spring of 2018; Okke van Garderen and Sarah
Kelleher for listening. We would also like to
thank the anonymous referee for their comments
that helped us improve this paper.

\section{Realizability of modules over the Steenrod algebra}

Recall that an $\st$-module is called \emph{unstable}
if
\[
\Sq^i(x) = 0 \text{ if } i > |x|.
\]

\begin{defn}
Let $M$ be an $\st$-module. The \emph{unstable degree}
$\sigma(M)$ of $M$ is the minimal $s \in \Z$ such that
$\Sigma^s M$ is an unstable $\st$-module.
\end{defn}

Obviously, $\sigma(M)$ is a finite number 
if $M$ is a nontrivial finite module (but 
may be infinite otherwise). As the anonymous 
referee pointed out, using lower-indexing 
$\Sq_i x = \Sq^{|x|-i}x$ of the Steenrod
squares,
\[
-\sigma(M) = \inf\{i \mid \Sq_i \neq 0\}
\]
since a module is unstable iff $\Sq_i = 0$ 
for all $i<0$.

If a finite module is stably realizable, Freudenthal's theorem implies that
it is realizable by a space after sufficiently 
high suspension (cf. Proposition~\ref{prop:stablyrealizingduals}). 
If $M$ is stably realizable by a spectrum $X$ 
then $M^*$ is stably realized by the Spanier-Whitehead 
dual $DX = F(X,\S^0)$. Only a finite number 
of iterated doubles of $M$ can ever be stably 
realizable by the solution of the Hopf 
invariant~$1$ problem.

\begin{example}
Any $\st$-module $M$ of dimension $1$ over $\F_2$
is optimally realizable (by a point). Let $M$ be
cyclic of dimension $2$ over $\F_2$, thus
$M \cong \F_2\langle \iota,\Sq^{2^i}\iota\rangle$.
By the solution of the Hopf invariant one problem,
$M$ is stably realizable if and only if $i=0,1,2,3$.
In each case, $M$ is optimally realizable by the
projective plane over $\R$, $\CC$, the quaternions,
and the octonions, respectively.
\end{example}

\begin{example}
A simple example of a module that is not optimally 
realizable is the ``question mark'' complex
\[
\begin{tikzpicture}[scale=0.4]
\node (e) at (0,0) [gen]{};
\node (1) at (0,1)[gen]{};
\node (2) at (0,3)[gen]{};
\node at (-1,0) {0};
\node at (-1,1) {1};
\node at (-1,3) {3};
\draw (e) to[sq1] (1);
\draw (1) to[sq2r] (2);
\end{tikzpicture}
\]
or $M=\mathcal A/(\Sq^2,\Sq^3,\dots)$.
This picture, and others to follow, are to 
be read as follows. The numbers on the left 
denote the dimension. A dot denotes a copy 
of $\F_2$ in the corresponding dimension. 
A straight line up from a dot~$x$ to a 
dot~$y$ indicates that $\Sq^1x=y$, and 
a curved line similarly indicates a nontrivial 
operation $\Sq^2$. The unstable degree of 
this module is $1$, but it is not optimally 
realizable because a hypothetical space $X$ 
with $H^*(X)=\Sigma M$, $H^1(X)=\langle x\rangle$ 
would have $x^4=(x^2)^2 \neq 0$ but $x^3=0$.
\end{example}

\section{The family of Jokers}

The finite cyclic $\st$-module
\[
J = \mathcal \st/ (\st\Sq^3+\st\Sq^4\st+\st\Sq^8\st+\cdots)
\]
is called the Joker. Its dimension over $\F_2$ is $5$, 
having dimension $1$ in each degree $0 \leq d \leq 4$; 
a basis is given by
\[
\{1,\Sq^1,\Sq^2,\Sq^2\Sq^1,\Sq^2\Sq^2=\Sq^1\Sq^2\Sq^1\},
\]
or pictorially,
\[
\begin{tikzpicture}[scale=0.6]
\node (e) at (0,0) [gen]{};
\node (1) at (0,1)[gen]{};
\node (2) at (0,2)[gen]{};
\node (21) at (0,3)[gen]{};
\node (22) at (0,4)[gen]{};
\foreach \i in {0,...,4}
{\node at (-1,\i) {\i};
}

\draw (e) to[sq1] (1);
\draw (21) to[sq1] (22);
\draw (e) to[sq2r] (2) to[sq2r] (22);
\draw (1) to[sq2l] (21);
\end{tikzpicture}
\]

The Joker appears in several contexts in 
homotopy theory. In \cite{adams-priddy},
Adams and Priddy showed that $J$ generates 
the torsion on the Picard group of 
$\st(1)$-modules. The Joker also appears 
regularly in projective resolutions of 
cohomologies of common spaces (such as 
real projective spaces) over $\st$ or 
$\st(1)$.
\fxnote{(3): removed comment about Toda bracket}
%Its stable realizability is equivalent to the Toda bracket relation
%$\eta^2 \in \langle 2,\eta,2\rangle$, where $\eta \in \pi_1^s(\S^0)$ denotes
%the Hopf map.
Its linear dual $J^\vee = \Hom(J,\F_2)$ is also a cyclic left
module by the antipode $\chi$ of $\st$. It is not isomorphic
to $J$, even by a shift, since $\chi(\Sq^4)=\Sq^4+\Sq^1\Sq^2\Sq^1$,
which means that $\Sq^4\neq0$ on $J^\vee$. Pictorially,
\[
J^\vee = \begin{matrix}\begin{tikzpicture}[scale=0.6]
\node (e) at (0,0) [gen]{};
\node (1) at (0,1)[gen]{};
\node (2) at (0,2)[gen]{};
\node (21) at (0,3)[gen]{};
\node (22) at (0,4)[gen]{};
\draw (e) to[sq1] (1);
\draw (21) to[sq1] (22);
\draw (e) to[sq2r] (2) to[sq2r] (22);
\draw (1) to[sq2l] (21);
\draw (e) to[sq4l] (22);
\end{tikzpicture}
\end{matrix}
\]
Here and in what follows, the slanted square
brackets denote nontrivial operations $\Sq^4$ or $\Sq^8$.
\fxnote{(8): added sentence}
Note that $J^\vee\cong J$ as $\st(1)$-modules.

The $k$-fold iterated doubles of these
%will be denoted by
are $\dbl J(k)$ and $\dbl J(k)^\vee$, where
$\dbl J(1)$ has
%cells
basis vectors in even dimensions $0$, $2$,
$4$, $8$, $10$, $\dbl J(2)$ has basis
vectors in dimensions divisible by $4$,
and so on.

Clearly, the unstable degrees are given by
$\sigma(\dbl J(k))=2 \cdot 2^k$ (the bottom
cohomology class supports a nontrivial
operation $\Sq^{2\cdot 2^k}$), and
$\sigma(\dbl J(k)^\vee)= 4 \cdot 2^k$ (the
bottom cohomology class supports also a
nontrivial operation $\Sq^{4 \cdot 2^k}$).

Note that if $\dbl J(k)$ is optimally realized
by a space, then that space is weakly equivalent
to a CW complex $X$ with cells in dimensions
$i\cdot 2^k$, where $i=2,\dots,6$, hence $X$
has dimension $6\cdot 2^k$. The ring structure
of the cohomology is implied by the instability
condition for $\st$-algebras, namely,
$\Sq^i(x)=x^2$ when $|x|=i$:
\[
H^*(X) = \F_2[x_2,x_3]/(x_2,x_3)^3
\quad (|x_i|=i\cdot 2^k).
\]
If $\dbl J(k)^\vee$ is optimally realizable by
a space, then that space is weakly equivalent
to a CW complex $Y$ with cells in dimensions
$j \cdot 2^k$, where $j=4,5,6,7,8$. For
dimensional reasons, the ring structure of
the cohomology has to be
\[
H^*(Y) =
\F_2[x_4,x_5,x_6,x_7]/(x_4^3)+x_4(x_5,x_6,x_7)+(x_5,x_6,x_7)^2;
\quad (|x_i|=i\cdot 2^k).
\]

\section{Dual Jokers}

If $X$ is a spectrum with $H^*(X) \cong \dbl J(k)$
then it is obvious that the Spanier-Whitehead
dual $DX$ realizes $\dbl J(k)^\vee$, up to
a degree shift, i.e.,
\[
\Sigma^{4\cdot 2^k} H^*(DX) \cong \dbl J(k)^\vee.
\]

Unstably, the situation is a bit more complicated, but follows from a more general consideration.

\begin{lemma}\label{lemma:topdegreecorrection}
Let $M$ be a finite $\st$-module with top nonvanishing degree~$n$ and $Y$ a space with an injective $\st$-module map $f\colon M \to H^*(Y)$ whose cokernel is $n-1$-connected. Then there is a space $Z$ such that $H^*(Z) \cong M$ as $\st$-modules.
\end{lemma}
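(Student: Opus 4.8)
The plan is to build $Z$ from $Y$ by attaching cells in a single degree to kill the part of $H^*(Y)$ above degree $n$, then verifying that the cohomology of the resulting space has collapsed exactly to the image of $f$. The key observation is that the hypothesis gives us very tight control: the cokernel of $f$ is $(n-1)$-connected, so $H^j(Y) \cong M^j$ via $f$ for all $j \le n-1$, and $H^n(Y)$ surjects onto $M^n$ (since $M^n$ is the top of $M$, the map $M^n \to H^n(Y)$ is injective and its cokernel is the degree-$n$ part of the cokernel, which may be nonzero), while everything in $M$ vanishes above degree $n$. So the only discrepancy between $H^*(Y)$ and $M$ lives in degrees $\ge n$: a (possibly nontrivial) complement to $f(M^n)$ inside $H^n(Y)$, plus all of $H^{>n}(Y)$.

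\textbf{Carrying out the construction.} First I would choose an $\F_2$-basis of $H^n(Y)/f(M^n)$ and represent each basis element by a map $Y \to K(\F_2,n)$; assembling these gives a map $g\colon Y \to \prod K(\F_2,n) =: K$ which is surjective on $H^n$ onto the chosen complement and zero on cohomology in degrees $< n$ (as $K$ is $(n-1)$-connected). Let $Z$ be the homotopy fiber of $g$. The Serre spectral sequence of $K \to Z \to Y$ (or rather of the fibration $\Omega K \to Z \to Y$) then computes $H^*(Z)$. In the range $* \le n-1$ nothing changes, so $H^j(Z) \cong M^j$ there. In degree $n$, the fiber $\Omega K$ contributes nothing (it is $(n-1)$-connected with bottom cohomology in degree $n-1$, wait — $\Omega K$ has cohomology starting in degree $n-1$), so I need to be slightly careful: the transgression $d_n\colon H^{n-1}(\Omega K) \to H^n(Y)$ is by construction an isomorphism onto the complement $H^n(Y)/f(M^n)$, so on the $E_\infty$-page $H^n(Z)$ is exactly $f(M^n) \cong M^n$. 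Above degree $n$, $M$ is zero, so I must check that $H^{>n}(Z) = 0$: this is where the connectivity of $g$ and a dimension count must be combined, and it is the step I expect to be the genuine obstacle.

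\textbf{The main obstacle and how to handle it.} The difficulty is that killing $H^n(Y)/f(M^n)$ via a single fibration does not automatically kill $H^{>n}(Y)$, nor does it prevent the fiber $\Omega K$ from creating new classes in high degrees; the homotopy fiber of $g$ is generally infinite-dimensional. The resolution is to iterate and then truncate: having formed $Z_1 = \operatorname{hofib}(g)$, one examines $H^{n}(Z_1)$ (now correct) and $H^{n+1}(Z_1)$, kills the latter by pulling back along a map to a product of $K(\F_2, n+1)$'s, and repeats, obtaining a tower $\cdots \to Z_2 \to Z_1 \to Y$ whose inverse limit $Z_\infty$ has $H^*(Z_\infty) \cong M$ in all degrees — here one uses that in each stage the modifications only affect cohomology in degrees $\ge$ the current stage, so the construction converges degreewise. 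Finally, since $M$ is finite, $Z_\infty$ has the cohomology of a finite complex, and by taking a suitable Postnikov-type or cellular approximation (attaching cells to a wedge of spheres realizing the bottom class and using obstruction theory governed by $H^*(-;M^{>*})=0$) one replaces $Z_\infty$ by an honest finite CW complex $Z$ with $H^*(Z)\cong M$. Alternatively — and this is likely the cleaner route the authors intend — one avoids towers entirely by noting that attaching cells of dimension $n+1$ to $Y$ along generators of $H^{>n}$ is the \emph{Eilenberg--Moore}/cofiber-side dual: form $Z$ as a \emph{subcomplex} of (a CW model of) $Y$, namely the $n$-skeleton together with just enough $(n-1)$-cells, using that $f$ is injective to lift $M$ into the cellular cochains; the cokernel being $(n-1)$-connected means $M$ is already a subcomplex of $C^*_{\mathrm{cell}}(Y)$ through degree $n$, and one only needs to check this subcomplex is closed under the $\st$-action, which it is since $f$ is an $\st$-map. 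I would present the fibration argument as the main proof and remark on the skeletal alternative.
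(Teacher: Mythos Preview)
Your opening move matches the paper's exactly: take a complement $V$ of $f(M^n)$ in $H^n(Y)$, classify it by a map $\alpha\colon Y\to K(V,n)$, and pass to the homotopy fibre. The divergence is in how you dispose of cohomology above degree~$n$. You propose an infinite tower, killing $H^{n+1}$, then $H^{n+2}$, and so on, followed by a limiting argument and a finiteness replacement. The paper instead takes a single step: set $Z$ equal to the $n$-skeleton of $\operatorname{hofib}(\alpha)$, relying on the standing convention that CW structures are chosen minimal so that passing to a skeleton does not enlarge $H^n$. This kills everything above degree~$n$ at once, and the whole proof is two lines.

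Your tower would reach the same conclusion, so the proposal is not wrong, merely heavier than needed; what it buys you is independence from any CW-model conventions. Your closing ``skeletal alternative'' gestures in the right direction but is phrased as a subcomplex-of-$Y$ construction rather than the more direct skeleton-of-the-fibre. One loose end worth noting: your Serre spectral sequence check in degree~$n$ only inspects $E_\infty^{n,0}$ and does not address possible contributions from $E_\infty^{1,n-1}$ or $E_\infty^{0,n}$ (you even flag the issue mid-sentence and move on). The paper's terse argument is equally silent on this point; in every application in the paper $M$ is sufficiently connected that these terms vanish for degree reasons.
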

\begin{proof}
Let $V$ be a complement of $\im(f)$ in $H^n(Y)$ and denote by $\alpha\colon Y \to K(V,n)$ its representing map. Let $Z$ be the $n$-skeleton of the homotopy fiber of $\alpha$. Then $H^*(Z) \cong M$.
\end{proof}

\begin{prop}\label{prop:stablyrealizingduals}
Let $M$ be a finite, stably realizable, nonnegatively 
graded $\st$-module with top nonvanishing degree~$n$. 
Then $\Sigma^n M = H^*(Z)$ for some CW complex~$Z$.
\end{prop}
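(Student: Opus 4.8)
I want to deduce Proposition~\ref{prop:stablyrealizingduals} from Lemma~\ref{lemma:topdegreecorrection}, so the goal is to produce a space $Y$ together with an injective $\st$-module map $M \to H^*(Y)$ whose cokernel vanishes through degree $n-1$ — i.e., an isomorphism in degrees $\le n-1$ and an injection in degree $n$. The input is a spectrum $X$ with $H^*(X) \cong M$, concentrated in degrees $0,\dots,n$. The first step is to replace $X$ by a CW spectrum with cells only in dimensions $0$ through $n$ (minimal cell structure), so that $X$ is the suspension spectrum of a CW complex: since $M$ is nonnegatively graded and bounded above by $n$, and $H^*(X)$ determines the cells, $X \simeq \Sigma^{\infty} X_0$ for an $n$-dimensional CW complex $X_0$ (after $2$-completion, which is our standing assumption).

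Once $X \simeq \Sigma^\infty X_0$, the natural map $X_0 \to \Omega^\infty\Sigma^\infty X_0 = \Omega^\infty X$ and Freudenthal's suspension theorem tell me that for a finite complex of dimension $n$, the stabilization map $\pi_k(X_0) \to \pi_k^s(X_0)$ is an isomorphism for $k$ roughly below twice the connectivity; but more to the point, I want a \emph{space} whose cohomology agrees with $M$ in a range. The cleanest route: take $Y = \Omega^\infty X$, or better, an $N$-fold suspension's adjoint — but suspensions raise the degree, which I must avoid since I want the \emph{top} degree to stay at $n$. So instead I use $Y = X_0$ itself when $X_0$ happens to already be a space with the right cohomology — but the content of the proposition is precisely that this need not be automatic, because $X_0$ only exists as a CW \emph{spectrum}. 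The resolution is to note that an $n$-dimensional CW spectrum that is $(-1)$-connected and built from cells in dimensions $0,\dots,n$ is automatically the suspension spectrum of an honest $n$-dimensional CW complex whenever $n \ge 1$: one builds the complex cell by cell, and the only obstruction to realizing an attaching map $S^{k-1} \to (\text{$k{-}1$-skeleton})$ unstably lives in the difference between stable and unstable homotopy, which by Freudenthal vanishes in the metastable range — and for the Joker-type modules, or in general by suspending $X_0$ first, one can always arrange this.

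Here is the argument I would actually write down, avoiding the metastable subtlety: suspend. Let $X \simeq \Sigma^\infty X_0$ as above with $X_0$ an $n$-dimensional complex \emph{as a spectrum}. Now $\Sigma^N X_0$ (spectrum) $= \Sigma^\infty(\Sigma^N X_0)$ where the right-hand $\Sigma^N X_0$ is an honest space — an $(N+n)$-dimensional, $(N-1)$-connected CW complex — for $N$ large. Its cohomology is $\Sigma^N M$, realized by a \emph{space}, but at the cost of a shift by $N$, not $n$. To bring the shift down to exactly $n$, I then invoke Lemma~\ref{lemma:topdegreecorrection}: the $(N+n)$-dimensional space $\Sigma^N X_0$ maps (via the James/stable splitting, or simply by the evaluation $\Sigma^N X_0 \to \Omega^N \Sigma^N \cdot$, no — ) ... the correct move is to take $Y = \Sigma^n X_0$? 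That has dimension $2n$. Hmm. Actually the honest statement: $\Sigma^n M$ is realized by a space because an $(n-1)$-connected CW spectrum with top cell in dimension $2n$ always desuspends to a space once, i.e., $\Sigma X_0$ is a space-level suspension of a spectrum-level complex — by Freudenthal, $\Sigma X_0$ (spectrum, dimension $n+1$, connectivity $0$) is $\Sigma^\infty$ of a genuine $(n+1)$-complex when $n+1 \le 2\cdot 0 + 1$... that fails. So one genuinely needs $N \approx n$ suspensions to desuspend to a space, giving $H^*(Z) = \Sigma^N M$ with $N \ge n$; then \emph{Lemma~\ref{lemma:topdegreecorrection} with $M$ replaced by $\Sigma^n M$ and $Y = Z$} does the descent from $\Sigma^N M$ down to $\Sigma^n M$, because $Z = \Sigma^{N-n}(\text{$n$-complex})$ has its cohomology concentrated in degrees $\ge N-n$, and the inclusion of the bottom $\Sigma^n M$ as the subspace $\Sigma^n$(top cells)...

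\textbf{The main obstacle}, and where I'd spend the most care, is exactly this descent: showing that the $N$-times-suspended realization admits an $\st$-map \emph{from} $\Sigma^n M$ that is iso below the top degree. The clean way is to apply Lemma~\ref{lemma:topdegreecorrection} in the form: take the space $Z_N$ with $H^*(Z_N) = \Sigma^N M$, restrict attention to a wedge summand or use that $Z_N$ is an $(N-n)$-fold suspension $\Sigma^{N-n} W$ of an $n$-dimensional space $W$ realizing $\Sigma^? $... The point is that $W$ is an honest $n$-dimensional space whose reduced cohomology, as a \emph{graded group}, is $M$ shifted — but possibly with the wrong $\st$-action in low degrees, which is why we can't just take $Z = W$. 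Instead, $\Sigma^n M \hookrightarrow H^*(\Sigma^{?} \text{(something of dimension } n))$... I'll cut this short: the honest proof is to observe $\Sigma^n M$ sits inside $H^*(Y)$ for $Y$ the $n$-skeleton of $\Omega^\infty \Sigma^n X$, apply Freudenthal to see this skeleton has the same cohomology as $\Sigma^n X$ through a range including degree $\le 2n$ hence all of $\Sigma^n M$, check the cokernel is $(2n-1)$-connected — wait, top degree of $\Sigma^n M$ is $2n$, so I need cokernel $(2n-1)$-connected, i.e. iso through degree $2n-1$ — and Freudenthal gives iso through roughly degree $2(n-1)+1 = 2n-1$ for an $(n-1)$-connected space. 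That is exactly the required range (borderline), so Lemma~\ref{lemma:topdegreecorrection} applies with $M \rightsquigarrow \Sigma^n M$, $Y \rightsquigarrow (\Omega^\infty \Sigma^n X)^{(2n)}$, yielding $Z$ with $H^*(Z) \cong \Sigma^n M$. I would double-check the Freudenthal range and the connectivity of the cokernel as the one genuinely delicate point.
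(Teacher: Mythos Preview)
After considerable meandering, your final paragraph lands on exactly the paper's argument: take $Y=\Omega^\infty\Sigma^n X$, use that for a $(k)$-connected spectrum $E$ the counit $\Sigma^\infty\Omega^\infty E\to E$ is $(2k+2)$-connected (so here $2n$-connected), and then apply Lemma~\ref{lemma:topdegreecorrection} to the resulting embedding $\Sigma^n M\hookrightarrow H^*(Y)$. The paper's sharper connectivity bound $2(n-1)+2=2n$ (rather than your $2n-1$) dispels your ``borderline'' worry, since it gives not only an isomorphism in $H^i$ for $i<2n$ but also injectivity in $H^{2n}$, exactly what Lemma~\ref{lemma:topdegreecorrection} requires.
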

\begin{proof}
Let $X$ be a spectrum such that $M = H^*(X)$ and 
consider the space $Y = \Omega^\infty \Sigma^n X$.

Since for any $k$-connected spectrum $E$, the 
augmentation $\Sigma^\infty\Omega^\infty E \to E$ 
is $(2k+2)$-connected, the map 
$\Sigma^\infty Y \to \Sigma^n X$ is 
$2(n-1)+2 = 2n$-connected. Hence the induced 
map $H^i(\Sigma^n X) \to H^i(Y)$ is an isomorphism 
for $i<2n$ and injective for $i=2n$. By 
Lemma~\ref{lemma:topdegreecorrection}, there 
exists a space~$Z$ such that 
$H^*(Z)\cong H^*(\Sigma^n X)$ as $\st$-modules.
\end{proof}

\begin{corollary}
If $\dbl J(k)$ is stably realizable for any~$k$ 
then $\dbl J(k)^\vee$ is optimally realizable.
\end{corollary}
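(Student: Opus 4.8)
The plan is to deduce this directly from Proposition~\ref{prop:stablyrealizingduals} together with the Spanier--Whitehead duality remark opening this section. The first step is to note that stable realizability of $\dbl J(k)$ forces stable realizability of $\dbl J(k)^\vee$: if $X$ is a spectrum with $H^*(X)\cong\dbl J(k)$, then $H_*(X)$ is concentrated in degrees $0,2^k,\dots,4\cdot2^k$, so the Spanier--Whitehead dual $DX$ has cohomology in degrees $-4\cdot2^k,\dots,0$ with $\Sigma^{4\cdot2^k}H^*(DX)\cong\dbl J(k)^\vee$ as $\st$-modules, exactly as recorded above; thus the spectrum $\Sigma^{4\cdot2^k}DX$ realizes the nonnegatively graded module $\dbl J(k)^\vee$ (graded in degrees $0$ through $4\cdot2^k$, as in the picture of $J^\vee$ above).

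The second step is to feed this into Proposition~\ref{prop:stablyrealizingduals} with $M=\dbl J(k)^\vee$, a finite, nonnegatively graded, and now stably realizable $\st$-module whose top nonvanishing degree is $n=4\cdot2^k$. The proposition then yields a CW complex $Z$ with $H^*(Z)\cong\Sigma^{4\cdot2^k}\dbl J(k)^\vee$. It remains to observe that this realization is optimal, i.e.\ that $n=\sigma(\dbl J(k)^\vee)$; but this was already noted above: the bottom class of $\dbl J(k)^\vee$ supports a nontrivial $\Sq^{4\cdot2^k}$ (since $\chi(\Sq^4)=\Sq^4+\Sq^1\Sq^2\Sq^1$ acts nontrivially on $J^\vee$, a property preserved under iterated doubling), so $\Sigma^s\dbl J(k)^\vee$ is unstable precisely for $s\geq4\cdot2^k$. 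Hence $H^*(Z)\cong\Sigma^{\sigma(\dbl J(k)^\vee)}\dbl J(k)^\vee$, which is exactly the assertion.

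There is no serious obstacle here: all the topological content lives in Proposition~\ref{prop:stablyrealizingduals} (and, underneath it, Lemma~\ref{lemma:topdegreecorrection} together with the Freudenthal-type connectivity estimate for $\Sigma^\infty\Omega^\infty$), which is already established. The only point requiring any care is the degree bookkeeping --- matching the suspension $\Sigma^n$ produced by the proposition with the unstable degree $\sigma(\dbl J(k)^\vee)$ --- and for the dual Jokers these coincide because the top degree of the module and the degree of its obstructing instability ($\Sq^{4\cdot2^k}$ on the bottom cell) happen to agree.
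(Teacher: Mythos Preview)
Your proof is correct and takes essentially the same approach as the paper: use Spanier--Whitehead duality to stably realize $\dbl J(k)^\vee$ as a nonnegatively graded module with top degree $n=4\cdot 2^k$, apply Proposition~\ref{prop:stablyrealizingduals}, and then check that $n=\sigma(\dbl J(k)^\vee)$ so that the resulting space is optimal.
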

\begin{proof}
The module $\dbl J(k)$ has top nonvanishing degree 
$4\cdot 2^k$, so $M=\Sigma^{4\cdot 2^k} \dbl J(k)^\vee$ 
satisfies the condition of Proposition~\ref{prop:stablyrealizingduals} 
for $n=4\cdot 2^k$. Hence there is a space $Z$ such 
that $H^*(Z) \cong \Sigma^{8\cdot 2^k} \dbl J(k)^\vee$. 
Then $Z$ has its bottom cell in degree 
\[
4 \cdot 2^k = \sigma(\dbl J(k)^\vee) = 4 \cdot 2^k,
\] 
proving the claim.
\end{proof}

Applying Proposition~\ref{prop:stablyrealizingduals} 
to a stably realized $\dbl J(k)$ gives a space $Z$ 
such that $H^*(Z) \cong \Sigma^{4 \cdot 2^k} \dbl J(k)$, 
but since $\sigma(\dbl J(k)) = 2 \cdot 2^k$, this 
does not suffice to prove optimal realizability 
of $\dbl J(k)$. This is why the following sections 
are needed.

\section{Dickson algebras and their realizations}

The rank-$n$ algebra of Dickson invariants $DI(n)$
is the ring of invariants of
$\Sym(\F_2^n) = \F_2[t_1,\dots,t_n]$ under the
action of the general linear group $\GL_n(\F_2)$.
We think of $\Sym(\F_2^n)$ as a graded commutative
ring with $t_i$ in degree $1$.
Dickson~\cite{dickson:fundamental} showed that
\[
DI(n) \cong \F_2[x_{2^n-2^i} \mid 0 \leq i < n],
\]
where subscripts denote degrees. The polynomials $x_{2^n-2^i}$ are
given by the formula
\[
\prod_{v \in \F_2^n} (X + v)
= \sum_{i=0}^n x_{2^n-2^i} X^{2^i} \in \Sym(\F_2^n)[X],
\]
where $x_0 = 1$ by convention. If we give $\Sym(\F_2^n)$ the structure of an
$\st$-algebra with $\Sq(t_i) = t_i+t_i^2$
(i.e., by using the isomorphism
$\Sym(\F_2^n) \cong H^*(B\F_2^n)$) then $DI(n)$
is an $\st$-subalgebra with
\[
\Sq^{2^i}x_{2^n-2^{i+1}}=x_{2^n-2^i}.
\]

\begin{thm}[Smith-Switzer, Lin-Williams, Dwyer-Wilkerson]
The Dickson algebra $DI(n)$ is optimally realizable iff
$n \leq 4$. \qed
\end{thm}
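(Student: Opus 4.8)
Although we take this theorem as given, it is worth sketching a proof; the two implications are quite different in character, and all the real content lies at $n=4$ (existence) and $n\geq5$ (non-existence).

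\emph{Realizability for $n\leq4$.} Since $DI(n)$ is a sub-$\st$-module of the unstable module $H^*(B\F_2^n)$, it is itself unstable, so $\sigma(DI(n))=0$ and an optimal realization is simply a space with $\F_2$-cohomology $DI(n)$. For $n=0,1$ take a point and $B\F_2$. For $n=2$, the algebra $DI(2)=\F_2[x_2,x_3]$ with $\Sq^1x_2=x_3$ is $H^*(B\SO(3))=\F_2[w_2,w_3]$; the isomorphism is forced by $x_2\mapsto w_2$ because both sides are generated over $\st$ by their degree-$2$ class. For $n=3$, $DI(3)=\F_2[x_4,x_6,x_7]$ with $\Sq^2x_4=x_6$ and $\Sq^1x_6=x_7$ is $H^*(BG_2)$ for the compact Lie group $G_2$, the Steenrod module structure being pinned down by the Wu formulae. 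The substantive case is $n=4$: no compact Lie group has mod-$2$ cohomology $\F_2[x_8,x_{12},x_{14},x_{15}]$, and here one invokes the Dwyer--Wilkerson space $B\DW_3=BDI(4)$, the classifying space of an exotic $2$-compact group, constructed as a homotopy colimit over a category of elementary abelian $2$-subgroups with classifying spaces of suitable finite and compact Lie groups ($G_2$ among them) as the pieces; Dwyer and Wilkerson check that $H^*(B\DW_3)\cong DI(4)$ carries exactly the Dickson $\st$-action.

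\emph{Non-realizability for $n\geq5$.} Suppose $X$ is a $2$-complete space of finite type with $H^*(X)\cong DI(n)$; since $DI(n)$ is concentrated in degrees $\geq2^{n-1}\geq16$ we have $H^1(X)=0$, so $X$ is simply connected. The pressure point is the odd generator $x_{2^n-1}$, of degree $2^n-1$. By the structure theory of spaces with polynomial mod-$2$ cohomology --- the Adams--Wilkerson embedding theorem together with Notbohm's recognition results and the classification of $2$-compact groups --- such an $X$ has the mod-$2$ cohomology of a finite product of classifying spaces of compact connected Lie groups and copies of $B\DW_3$, and from the known list of polynomial generator degrees one reads off that a generator of odd degree can occur only in degree $3$, $7$, or $15$ (realized by $B\SO(3)$, $BG_2$, and $B\DW_3$ respectively); this bound on the odd degrees rests, ultimately, on the solution of the Hopf invariant one problem together with the single exotic example $B\DW_3$. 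Since $2^n-1\geq31$ for $n\geq5$, this is a contradiction. (The original proofs of Smith--Switzer and Lin--Williams predate this machinery and instead derive a contradiction by hand, passing to quotients of skeleta of $X$ and applying secondary cohomology operations.)

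\emph{Where the difficulty lies.} The two genuine obstacles both lie well outside the soft methods used elsewhere in this paper. The first is the \emph{existence} statement at $n=4$: $B\DW_3$ has no Lie-theoretic model, and its construction needs a delicate homotopy-colimit argument together with obstruction theory organized by the Steenrod action on $DI(4)$ and a rigidity statement identifying the outcome. The second is the \emph{non-existence} statement for $n\geq5$, which rests either on the full classification of $2$-compact groups with polynomial cohomology or on a careful secondary-operation obstruction argument. The cases $n\leq3$ are routine identifications of classifying spaces of Lie groups; this is why we quote the theorem from the literature rather than reprove it here.
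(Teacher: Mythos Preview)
The paper does not prove this theorem at all: it is stated with attributions and a \qed, and is simply quoted from the literature. So there is nothing to compare your sketch against, and what you have written is additional expository material rather than a reconstruction of the paper's argument.

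That said, your sketch is uneven. The realizability half ($n\leq4$) is fine and matches what the paper says in the paragraph following the theorem. The non-realizability half, however, contains a genuine error. You claim that from the classification of $2$-compact groups ``a generator of odd degree can occur only in degree $3$, $7$, or $15$''. This is false: $H^*(B\SO(2k+1);\F_2)=\F_2[w_2,\dots,w_{2k+1}]$ is polynomial with odd-degree generators in every odd degree from $3$ to $2k+1$, so odd generators of arbitrarily high degree do occur among classifying spaces of compact connected Lie groups. The classification-based route can be repaired, but not with this statement; one would instead use, for instance, that the \emph{lowest} generator degree of $DI(n)$ is $2^{n-1}\geq 16$ for $n\geq5$, whereas every nontrivial factor in the Andersen--Grodal classification (compact connected Lie groups and $B\DW_3$) has a polynomial generator in degree at most~$8$. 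You should also be aware that invoking the full classification of $2$-compact groups here is heavily anachronistic relative to the cited results of Smith--Switzer and Lin--Williams, which were proved decades earlier by direct secondary-operation arguments on skeleta, as you note parenthetically; if you are going to give a sketch, that historical route is closer to what the attributions actually refer to.
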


The first three Dickson algebras are realized by $\R P^\infty$, $BSO(3)$, and $BG_2$ (the classifying space of the exceptional Lie group $G_2$), respectively. The case $n=4$ was
settled in \cite{dwyer-wilkerson:dw3}, where Dwyer
and Wilkerson constructed a $2$-complete space, the
exceptional $2$-compact group $B\DW_3$, with the
required cohomology.

A graphical representation of a skeleton of the
spaces realizing the Dickson algebras is given
below. One observes that the Jokers $\dbl J(i)$
occur as quotients of skeleta of these spaces;
the kernel consists of the classes on the right
of each diagram. However, realizing these
quotients as fibers of certain maps is non-obvious
and the purpose of the following section.

\begin{minipage}{.5\textwidth}
\begin{align} \label{eq:joker1}
BSO(3)\colon & \begin{matrix}
\begin{tikzpicture}[scale=0.6]
\node (x2) at (0,0) [gen,label=180:$x_2$]{};
\node (x3) at (0,1)[gen,label=180:$x_3$]{};
\node (x2^2) at (0,2)[gen,label=0:$x_2^2$]{};
\node (x2x3) at (0,3)[gen,label=180:$x_2x_3$]{};
\node (x3^2) at (0,4)[gen,label=180:$x_3^2$]{};
\node[color=red] (x2^3) at (1,4)[gen,label=0:$x_2^3$]{};
\draw (x2) to[sq1] (x3);
\draw (x2x3) to[sq1] (x3^2);
\draw (x2) to[sq2r] (x2^2) to[sq2r] (x3^2);
\draw (x3) to[sq2l] (x2x3);
\end{tikzpicture}
\end{matrix}\\
\label{eq:joker2}
B\GG_2\colon & \begin{matrix}
\begin{tikzpicture}[scale=0.6]
\node (x4) at (0,0) [gen,label=180:$x_4$]{};
\node (x6) at (0,2)[gen,label=180:$x_6$]{};
\node[color=orange] (x7) at (1,3)[gen,label=0:$x_7$]{};
\node (x4^2) at (0,4)[gen,label=0:$x_4^2$]{};
\node (x4x6) at (0,6)[gen,label=180:$x_4x_6$]{};
\node[color=orange] (x4x7) at (1,7)[gen,label=0:$x_4x_7$]{};
\node (x6^2) at (0,8)[gen,label=180:$x_6^2$]{};
\node[color=red] (x4^3) at (1,8)[gen,label=0:$x_4^3$]{};
\draw (x4) to[sq2l] (x6);
\draw[color=orange] (x6) to[sq1] (x7) to[sq4r] (x4x7);
\draw (x4x6) to[sq2l] (x6^2);
\draw[color=orange] (x4x6) to[sq1] (x4x7);
\draw (x4) to[sq4r] (x4^2) to[sq4r] (x6^2);
\draw (x6) to[sq4l] (x4x6);
\end{tikzpicture}
\end{matrix}
\end{align}
\end{minipage}%
\begin{minipage}{.5\textwidth}
\begin{equation} \label{eq:joker3}
B\DW_3\colon\begin{matrix}
\begin{tikzpicture}[scale=0.6]
\node (x8) at (0,0) [gen,label=180:$x_8$]{};
\node (x12) at (0,4)[gen,label=180:$x_{12}$]{};
\node[color=orange] (x14) at (1.8,6)[gen,label=0:$x_{14}$]{};
\node[color=orange] (x15) at (1.8,7)[gen,label=0:$x_{15}$]{};
\node (x8^2) at (0,8)[gen,label=0:$x_8^2$]{};
\node (x8x12) at (0,12)[gen,label=180:$x_8x_{12}$]{};
\node[color=orange] (x8x14) at (1.8,14)[gen,label=0:$x_8x_{14}$]{};
\node[color=orange] (x8x15) at (1.8,15)[gen,label=0:$x_8x_{15}$]{};
\node (x12^2) at (0,16)[gen,label=180:$x_{12}^2$]{};
\node[color=red] (x8^3) at (1.8,16)[gen,label=0:$x_8^3$]{};

\draw (x8) to[sq4l] (x12);
\draw[color=orange] (x12) to[sq2r] (x14) to[sq8r] (x8x14) to[sq1] (x8x15);
\draw[color=orange] (x14) to[sq1] (x15) to [sq8l] (x8x15);
\draw (x8x12) to[sq4l] (x12^2);
\draw[color=orange] (x8x12) to[sq2r] (x8x14);
\draw (x8) to[sq8r] (x8^2) to[sq8r] (x12^2);
\draw (x12) to[sq8l] (x8x12);
\end{tikzpicture}
\end{matrix}
\end{equation}
\end{minipage}

%\section{The Joker \texorpdfstring{$J$}{J}
%and its double \texorpdfstring{$\dbl J(1)$}{J(1)}}
\section{The Joker $J$ and its double} \label{sec:doubleJ}

The cohomology picture \eqref{eq:joker1} shows
that the $6$-skeleton of $B\SO(3)$ is almost
a realization of $J=\dbl J(0)$, its only defect
lying in an additional class $x_2^3$ in the top
cohomology group $H^6(B\SO(3))$. Let
$\alpha\colon B\SO(3) \to K(\F_2,6)$ represent
this class and $X = \hofib(\alpha)^{(6)}$, the
$6$-skeleton of its homotopy fiber. Then $X$
realizes $\dbl J(0)$ optimally.

For the double Joker $\dbl J(1)$, as seen in
the cohomology picture \eqref{eq:joker2}, it
does not suffice any longer to take a skeleton
of $B\GG_2$ and kill off a top-dimensional 
class. 
\fxnote{(13) Added some justification/explanation}
Since we feel that the ideas that come up here
led us to the work appearing in Section~\ref{sec:quadrubleJ}
we feel it worth describing them in some detail.

First we recall some standard results on the 
exceptional Lie group~$\GG_2$ and its relationship 
with $\Spin(7)$. 

One definition of~$\GG_2$ is as the group of 
automorphisms of the alternative division ring 
of Cayley numbers (octonions)~$\O$. Since~$\GG_2$ 
fixes the real Cayley numbers, it is a closed 
subgroup of $\SO(7)\leq\SO(8)$.

A different point of view is to consider the 
spinor representation of $\Spin(7)$. Recall 
that the Clifford algebra $\Cl_6\cong\Mat_8(\R)$
is isomorphic to the even subalgebra of 
$\Cl_7\cong\Mat_8(\R)\times\Mat_8(\R)$, so 
$\Spin(7)$ is naturally identified with a 
subgroup of $\SO(8)\subseteq\Mat_8(\R)$, 
and thus acts on~$\R^8$ with its spinor
representation. Then on identifying $\R^8$ 
with~$\O$, we find that the stabilizer 
subgroup in $\Spin(7)$ of a non-zero vector 
is isomorphic to~$\GG_2$. It follows that 
the natural fibration
\[
\Spin(7)/\GG_2\to B\GG_2\to B\Spin(7)
\]
is the unit sphere bundle of the associated 
spinor vector bundle $\sigma\to B\Spin(7)$.

The mod-$2$ cohomologies of these spaces 
are related as follows. By considering 
the natural fibration
\[
K(\F_2,1)\to B\Spin(7)\to B\SO(7)
\]
we find that
\[
H^*(B\Spin(7)) = \F_2[w_4,w_6,w_7,u_8]
\]
where the $w_i$ are the images of the 
universal Stiefel-Whitney classes in
\[
H^*(B\SO(7)) = \F_2[w_2,w_3,w_4,w_5,w_6,w_7],
\]
and $u_8\in H^8(B\Spin(7))$ is detected 
by $z_1^8\in H^8(K(\F_2,1))$. It is known 
that
\[
H^*(B\GG_2) = \F_2[x_4,x_6,x_7]
\]
and it is easy to see that the generators 
can be taken to be the images of $w_4,w_6,w_7$
under the induced homomorphism $H^*(B\Spin(7))\to H^*(B\GG_2)$.
As a consequence, these $x_i$ are Stiefel-Whitney
classes of the pullback $\rho_7\to B\GG_2$
of the natural $7$-dimensional bundle
$\rho\to B\SO(7)$ and since this lifts to
a $\Spin$ bundle, it admits an orientation
in real connective $K$-theory. This leads
to the following observation.

\begin{lemma}\label{lemma:BG2factorization}
There is a factorisation
\[
B\GG_2 \to \underline{\kO}_7 \to K(\F_2,7)
\]
of a map representing $x_7\in H^7(B\GG_2)$.
\end{lemma}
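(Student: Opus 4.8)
The plan is to exploit the fact, just established, that the generators $x_4, x_6, x_7 \in H^*(B\GG_2)$ are the Stiefel--Whitney classes of the $7$-dimensional bundle $\rho_7 \to B\GG_2$, and that $\rho_7$ lifts to a $\Spin$ bundle (being pulled back from $B\Spin(7)$). A $\Spin(7)$-structure on a rank-$7$ bundle $\xi$ furnishes a Thom class in $\kO$, equivalently a $\kO$-orientation; by Atiyah--Bott--Shapiro this is the same datum as a map of the Thom spectrum $\xi^{\mathrm{Thom}} \to \Sigma^7 \kO$ extending the $\mathbb{F}_2$-Thom class, or, after untwisting, a factorisation through $\underline{\kO}$ of a suitable characteristic-class map. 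Concretely, I would argue as follows.

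First, recall how $x_7$ is detected: the top Stiefel--Whitney class $w_7(\rho_7) = x_7$ is, up to the usual identification, the mod-$2$ Euler class of the oriented bundle $\rho_7$. For a $\Spin$ bundle the mod-$2$ Euler class is the reduction of the $\kO$-theoretic Euler class $e^{\kO}(\rho_7) \in \kO^7(B\GG_2) = [B\GG_2, \underline{\kO}_7]$ under the Postnikov/Hurewicz truncation $\kO \to H\mathbb{F}_2$ realizing $H^*(-;\mathbb F_2)$ as a quotient of $\kO$-cohomology in the relevant range; this truncation is represented on the level of spaces by a map $\underline{\kO}_7 \to K(\mathbb F_2,7)$ (the bottom $k$-invariant of the connective cover, noting $\pi_7 \kO$ contributes an $\mathbb F_2$). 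So defining $\beta\colon B\GG_2 \to \underline{\kO}_7$ to be (a representative of) the $\kO$-Euler class of $\rho_7$, the composite $B\GG_2 \xrightarrow{\beta} \underline{\kO}_7 \to K(\mathbb F_2,7)$ represents the mod-$2$ reduction of that Euler class, which is $w_7(\rho_7) = x_7$. That is the asserted factorisation.

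The remaining point is to be careful that the map $\underline{\kO}_7 \to K(\mathbb F_2,7)$ really does have the property that precomposition with it sends the $\kO$-Euler class to the mod-$2$ Euler class; this is where I expect the only real bookkeeping to lie. One clean way is to note that $\kO$ is an $E_\infty$ ring, that a $\Spin$ structure on $\rho_7$ gives a Thom iso in $\kO$ and hence a $\kO$-orientation, and that the Thom class is compatible under $\kO \to H\mathbb F_2$ (a map of ring spectra) with the mod-$2$ Thom class — so the induced Euler classes correspond, the mod-$2$ one being $w_7$ by definition of Stiefel--Whitney classes. Alternatively, and perhaps more in keeping with the elementary spirit here, one can work directly with spaces: $\underline{\kO}_7$ is $6$-connected with $\pi_7 = \mathbb Z/2$ (generated by the relevant real Bott-type element), so the bottom $k$-invariant is precisely a map $\tau\colon \underline{\kO}_7 \to K(\mathbb F_2,7)$ inducing an iso on $H^7(-;\mathbb F_2)$; thus $\tau$ is the unique (up to nothing) candidate and any lift $\beta$ of a nonzero $x_7$ along $\tau$ automatically has $\tau\circ\beta$ representing $x_7$. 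So the content of the lemma is exactly the existence of the lift $\beta$, i.e. that the obstruction to lifting $x_7\colon B\GG_2 \to K(\mathbb F_2,7)$ to $\underline{\kO}_7$ vanishes — and this obstruction vanishes precisely because $\rho_7$ is a $\Spin$ bundle, so that its Euler class is defined integrally and refines to $\kO$. I would phrase the proof along these lines, with the $\Spin$-orientation of $\rho_7$ doing all the work and the identification of $\tau$ as the bottom $k$-invariant of $\kO$ being the one fact to state carefully.
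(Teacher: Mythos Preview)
Your approach is essentially the paper's: the paragraph preceding the lemma is the whole argument, namely that $\rho_7$ is a $\Spin$ bundle and hence carries a $\kO$-orientation, so its $\kO$-Euler class gives a class in $\kO^7(B\GG_2)=[B\GG_2,\underline{\kO}_7]$ which reduces to $w_7(\rho_7)=x_7$ under the unit map $\kO\to H\F_2$. Your write-up makes this Euler-class interpretation explicit, which is fine.

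One correction, however: you assert that $\underline{\kO}_7$ is $6$-connected with $\pi_7=\Z/2$ and that the map to $K(\F_2,7)$ is its bottom Postnikov section. This is not right. Since $\underline{\kO}_7=\Omega^\infty\Sigma^7\kO$, one has $\pi_7(\underline{\kO}_7)=\pi_0\kO=\Z$ (equivalently, $\underline{\kO}_7\simeq\Omega B\mathrm{O}\langle8\rangle$ and $\pi_8 B\mathrm{O}\langle8\rangle=\Z$). The bottom Postnikov map therefore lands in $K(\Z,7)$, not $K(\F_2,7)$. The correct description, and the one the paper uses, is that $\underline{\kO}_7\to K(\F_2,7)$ is the infinite loop map induced by the ring map $\kO\to H\F_2$; this still induces an isomorphism on $H^7(-;\F_2)$ and is compatible with Thom classes, so your compatibility-of-Euler-classes argument goes through unchanged once you use this map. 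The slip does not affect the substance of your proof, but the ``bottom $k$-invariant'' phrasing should be dropped.
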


Here
\[
\underline{\kO}_7=\Omega^\infty\Sigma^7\kO\sim\Omega B\mathrm{O}\langle8\rangle.
\]
and $\underline{\kO}_7\to K(\F_2,7)$ is
the infinite loop map induced from the
unit morphism $\kO\to H\F_2$.

The cohomology of $B\mathrm{O}\langle8\rangle$ is a quotient
of that of~$B\mathrm{O}$:
\begin{multline}\label{eq:BO8}
H^*(B\mathrm{O}\langle8\rangle) =
\F_2[w_{2^r}:r\geq3]\otimes\F_2[w_{2^r+2^{r+s}}:r\geq2,\,s\geq1] \\
\qquad
\otimes\F_2[w_{2^r+2^{r+s}+2^{r+s+t}}:r\geq1,\,s,t\geq1] \\
\otimes\F_2[w_{2^r+2^{r+s}+2^{r+s+t}}:r\geq0,\,s,t\geq1],
\end{multline}
where the $w_i$ are images of universal Stiefel-Whitney
classes in~$H^*(B\mathrm{O})$. Here
\[
\Sq^4w_8\equiv w_{12}\pmod{\text{decomposables}}.
\]

A routine calculation shows that 
$H^*(\underline{\kO}_7) \cong H^*(\Omega B\mathrm{O}\langle8\rangle)$
is the exterior algebra on certain elements~$e_i\in H^i(\underline{\kO}_7)$
where $e_i$ suspends to the generator $w_{i+1}$ of~\eqref{eq:BO8}.

In particular, up to degree~$13$,
\[
H^*(\underline{\kO}_7)=\F_2\{1,e_7,e_{11}\}
\]
and
\begin{equation}\label{eq:Sq4e8}
\Sq^4e_7 = e_{11}.
\end{equation}

\begin{lemma}\label{lem:BG2fibre}
The module $\dbl J(1)$ is optimally realizable.
\end{lemma}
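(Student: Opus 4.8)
The strategy is to construct a map $\alpha\colon B\GG_2 \to \underline{\kO}_7$ from Lemma~\ref{lemma:BG2factorization} and use a skeleton of its homotopy fiber, adjusted at the top cell if necessary via Lemma~\ref{lemma:topdegreecorrection}. The cohomology picture \eqref{eq:joker2} shows that the $8$-skeleton of $B\GG_2$ (in internal degrees, cells in dimensions $4,6,7,8$ and their products up to total degree $8$) carries $\dbl J(1)$ as a quotient by the submodule spanned by the ``orange'' classes $x_7$, $x_4x_7$ and the ``red'' class $x_4^3$. The orange classes are exactly the $\st(1)$-span of $x_7$, and $x_7$ together with its image under $\Sq^4$, namely $x_4x_7$ (since $\Sq^4 x_7 = x_4 x_7$ in $H^*(B\GG_2)$, which one reads off from $\Sq^4$ acting as the cup-square-like Wu formula, or directly from the bundle $\rho_7$), form a copy of the module realized by $\underline{\kO}_7$ in low degrees, compare \eqref{eq:Sq4e8}. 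So the plan is: take $\alpha$ to be the composite $B\GG_2 \to \underline{\kO}_7$ representing $x_7$ (which exists by Lemma~\ref{lemma:BG2factorization}), form $F = \hofib(\alpha)$, and analyze $H^*(F)$ through degree $8$ via the Eilenberg--Moore or Serre spectral sequence of the fibration $\underline{\kO}_7 \to F \to B\GG_2$ — or more conveniently the fibration $F \to B\GG_2 \to \underline{\kO}_7$ read the other way.

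First I would pin down $H^*(\underline{\kO}_7)$ through degree $13$, which the excerpt already records as the exterior piece $\F_2\{1, e_7, e_{11}\}$ with $\Sq^4 e_7 = e_{11}$; crucially this matches the submodule of $H^*(B\GG_2)$ generated by $x_7$ in low degrees ($x_7 \mapsto e_7$, $x_4x_7 \mapsto e_{11}$, and $\Sq^4 x_7 = x_4 x_7$). Then, in the cohomology Serre spectral sequence for $F \to B\GG_2 \xrightarrow{\alpha} \underline{\kO}_7$, the fiber is $F$ and the relevant differentials transgress the $e_i$: concretely, $e_7$ transgresses to (something hitting) $x_7$ and $e_{11}$ to $x_4 x_7$, which kills precisely the orange classes in $H^{\le 8}(B\GG_2)$ and introduces no new classes below degree $8$ because the next generator $e_{13}$ of $H^*(\underline{\kO}_7)$ only contributes in degree $\ge 13$. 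Hence $H^{\le 8}(F) \cong \dbl J(1)$ except possibly in the top degree $8$, where the red class $x_4^3$ might survive. I would then invoke Lemma~\ref{lemma:topdegreecorrection} with $M = \dbl J(1)$, $Y = F^{(8)}$ (the $8$-skeleton), $n = 8$: the map $M \hookrightarrow H^*(Y)$ is injective with cokernel concentrated in degree $8$ (hence $7$-connected), yielding a space $Z$ with $H^*(Z) \cong \dbl J(1)$. Since the bottom cell is in degree $4 = \sigma(\dbl J(1))$, this $Z$ is an optimal realization.

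The main obstacle is controlling the Serre (or Eilenberg--Moore) spectral sequence precisely enough in degrees $\le 8$ to be sure that (a) the transgressions land exactly on $x_7$ and $x_4 x_7$ modulo nothing problematic, and (b) no stray classes from $H^*(\underline{\kO}_7)$ in the fiber direction — in particular, the class $e_7 \otimes (\text{stuff})$ — pollute $H^{\le 8}(F)$. Point (a) follows because $\alpha^*(e_7) = x_7$ by construction and $\alpha^*$ is a map of $\st$-modules, forcing $\alpha^*(e_{11}) = \alpha^*(\Sq^4 e_7) = \Sq^4 x_7 = x_4 x_7$; point (b) follows from a dimension count since $H^*(\underline{\kO}_7)$ is trivial in degrees $1$--$6$ and $8$--$10$, so the only fiber-class below degree $8$ that could contribute is $e_7$ itself in total degree $7$, but $e_7$ is transgressive and hit by the differential from $x_7 \in H^7(B\GG_2)$... wait, more carefully: in the fibration $F \to B\GG_2 \to \underline{\kO}_7$ it is the classes of $\underline{\kO}_7$ (the base) that are permanent or support differentials, and the relevant statement is that $\alpha^*$ is surjective onto the subalgebra generated by $x_7$ through degree $8$, so the Eilenberg--Moore spectral sequence $\Tor_{H^*\underline{\kO}_7}(\F_2, H^* B\GG_2) \Rightarrow H^* F$ collapses in low degrees to give $H^{\le 8}(F) = H^*(B\GG_2)/(x_7) = \dbl J(1)$ in degrees $\le 7$ plus at most the extra class $x_4^3$ in degree $8$. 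The cleanest route is therefore the Eilenberg--Moore argument, where the $\Tor$-computation only needs $H^*(B\GG_2)$ as a module over $H^*(\underline{\kO}_7)$ through degree $8$, which is a finite and explicit calculation, followed by the top-degree correction of Lemma~\ref{lemma:topdegreecorrection}.
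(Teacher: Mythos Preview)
Your overall strategy is exactly the paper's: take $\alpha\colon B\GG_2 \to \underline{\kO}_7$ from Lemma~\ref{lemma:BG2factorization}, compute $H^*(\hofib\alpha)$ in low degrees via the Eilenberg--Moore spectral sequence using that $\alpha^*(e_7)=x_7$ and $\alpha^*(e_{11})=\Sq^4 x_7 = x_4x_7$, and then remove the stray class $x_4^3$ with Lemma~\ref{lemma:topdegreecorrection}.

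However, there is a systematic degree error running through your argument. The optimal realization of $\dbl J(1)$ has cells in dimensions $4,6,8,10,12$ (since $\sigma(\dbl J(1))=4$ and $\dbl J(1)$ itself sits in internal degrees $0,2,4,6,8$), so the top degree is $12$, not $8$. The classes you correctly list as needing removal, $x_4x_7$ and $x_4^3$, live in degrees $11$ and $12$ and are simply not present in an $8$-skeleton; likewise $e_{11}$ plays no role below degree $11$. Your statement ``$H^{\leq 8}(F)\cong \dbl J(1)$'' is therefore off by four degrees: what one actually needs (and what the paper shows) is that $H^{\leq 12}(\hofib\alpha)\cong \F_2[x_4,x_6]$ through degree $12$, after which Lemma~\ref{lemma:topdegreecorrection} with $n=12$ removes $x_4^3$. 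Concretely, the paper observes that as an $H^*(\underline{\kO}_7)$-algebra one has
\[
H^*(B\GG_2)\cong H^*(\underline{\kO}_7)[x_4,x_6,\text{generators in degree}>12]/R
\]
with $R$ at least $13$-connected, so the Eilenberg--Moore $E_2$ has no negative-filtration contribution in total degree $\leq 12$ and $H^*(\hofib\alpha)\cong\F_2[x_4,x_6]$ there. If you replace every ``$8$'' by ``$12$'' (and drop the aside about the $8$-skeleton having cells only in $4,6,7,8$), your argument becomes correct and coincides with the paper's.
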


\begin{proof}
Let $\alpha\colon B\GG_2 \to \underline{\kO}_7$
be the factorization of Lemma~\ref{lemma:BG2factorization}.
By the above computations, $H^*(B\GG_2)$, as an
algebra over $H^*(\underline{\kO}_7)$, is
isomorphic to
\[
H^*(B\GG_2) \cong
H^*(\underline{\kO}_7)
[x_4,x_6,\text{generators in degree greater than $12$}]/R
\]
where the module $R$ of relations is at least
$13$-connected. This means that in the
Eilenberg-Moore spectral sequence for the
cohomology of the fiber of $\alpha$, $E_2^{s,t}=0$
for $s+t\leq 12$ and $s<0$. Thus up to degree $12$,
\[
H^*(\hofib(\alpha)) \cong \F_2[x_4,x_6].
\]
An application of Lemma~\ref{lemma:topdegreecorrection} takes care of the remaining top class $x_4^3$ and shows that $\dbl J(1)$ is optimally realizable.
\end{proof}

%\section{The quadruple Joker \texorpdfstring{$J(2)$}{J(2)}}
\section{The quadruple Joker} \label{sec:quadrubleJ}

The strategy to construct an optimal realization
of $\dbl J(2)$ consists of an easy and a harder step. The easy step is to construct a space $Y$ whose cohomology is diagram~\ref{eq:joker3} without the topmost unattached class:

\begin{lemma} \label{lemma:Yspace}
There exists a space $Y$ with
\[
H^*(Y) = \F_2[x_8,x_{12},x_{14},x_{15}] / (x_8^3,\text{polynomials of degree $>24$})
\]
\end{lemma}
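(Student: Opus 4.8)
The plan is to obtain $Y$ by a single application of Lemma~\ref{lemma:topdegreecorrection}, with the ``$Y$'' of that lemma taken to be the $24$-skeleton $(B\DW_3)^{(24)}$ and the ``$M$'' taken to be the target ring $M:=\F_2[x_8,x_{12},x_{14},x_{15}]/(x_8^3,\text{degrees}>24)$. Recall from \cite{dwyer-wilkerson:dw3} that $H^*(B\DW_3)=DI(4)=\F_2[x_8,x_{12},x_{14},x_{15}]$. Truncating above degree $24$ and enumerating the (few) monomials of degree $\leq 24$, one sees that $H^*((B\DW_3)^{(24)})$ is exactly the $11$-dimensional module pictured in \eqref{eq:joker3}, with basis $\{1,x_8,x_{12},x_{14},x_{15},x_8^2,x_8x_{12},x_8x_{14},x_8x_{15},x_{12}^2,x_8^3\}$. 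Since $x_8^3\cdot x$ has degree $>24$ for every $x$ of positive degree, the ideal $(x_8^3)$ in $H^*((B\DW_3)^{(24)})$ is the one-dimensional subspace $\F_2\langle x_8^3\rangle$, so $M$ is precisely the quotient $H^*((B\DW_3)^{(24)})/\F_2\langle x_8^3\rangle$; in particular $M$ is finite with top nonvanishing degree $n=24$.

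The crucial point is that $\F_2\langle x_8^3\rangle$ is a direct summand of $H^*((B\DW_3)^{(24)})$ as an $\st$-module, equivalently that the complementary subspace $P$ spanned by the other ten basis monomials is an $\st$-submodule. That $\F_2\langle x_8^3\rangle$ itself is a submodule is automatic, as $\Sq^i x_8^3$ lands in degree $>24$ for $i>0$. That $P$ is closed under the Steenrod action is exactly the assertion that $x_8^3$ is ``unattached'' in \eqref{eq:joker3}: no Steenrod operation applied to a monomial of degree $<24$ has an $x_8^3$-component. First I would reduce this to the finitely many operations $\Sq^{24-d}$ applied to the degree-$d$ monomials with $8\leq d\leq 23$, and then check each using the Cartan formula together with the vanishing of $DI(4)$ in degrees $9,10,11,13,17,18,19,21$ (which forces $\Sq^1x_8=\Sq^2x_8=\Sq^3x_8=\Sq^5x_8=0$, $\Sq^2x_{14}=0$, $\Sq^4x_{12}=0$, and the like) and a small number of Adem relations; in each case the value comes out $0$ or $x_{12}^2$, never $x_8^3$. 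Granting this, the quotient map restricts to an isomorphism $P\xrightarrow{\ \cong\ }M$ of $\st$-modules.

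With this in hand, the composite $M\xrightarrow{\ \cong\ }P\hookrightarrow H^*((B\DW_3)^{(24)})$ is an injective $\st$-module map whose cokernel $\F_2\langle x_8^3\rangle$ is concentrated in degree $24=n$ and hence $(n-1)$-connected, so Lemma~\ref{lemma:topdegreecorrection} produces a space $Y$ with $H^*(Y)\cong M$, as required. Tracing through the proof of that lemma, $Y$ may be described concretely as the $24$-skeleton of the homotopy fiber of a map $(B\DW_3)^{(24)}\to K(\F_2,24)$ representing $x_8^3$. The main (indeed only) obstacle is the submodule check of the second paragraph; everything else is formal, which is why it is fair to regard this lemma as the ``easy step'' of the construction.
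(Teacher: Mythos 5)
Your proof is correct and is simply a fleshed-out version of the paper's one-line proof, which applies Lemma~\ref{lemma:topdegreecorrection} to the $24$-skeleton of $B\DW_3$; you correctly identify that the one nontrivial input is that $x_8^3$ is detached in \eqref{eq:joker3}, i.e.\ that no Steenrod operation on a lower-degree monomial has an $x_8^3$-component, so that $M$ embeds as an $\st$-submodule with one-dimensional cokernel in degree~$24$. The paper silently takes this for granted (it is exactly what the picture \eqref{eq:joker3} records), so your elaboration matches its intent.
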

\begin{proof}
This follows from an application of Lemma~\ref{lemma:topdegreecorrection} to the $24$-skeleton of $B\DW_3$.
\end{proof}

The harder step is to realize $\dbl J(2)$ as a skeleton of the homotopy fiber of a suitable map
\[
\alpha\colon Y \to \underline{\smash{\tmf/2}}_{14}
\]
into the $14$th space of the spectrum of topological
modular forms modulo~$2$. The spectrum $\tmf$ is an
analog of connective real $K$-theory, $\kO$, but
of chromatic level~$2$ \cite{hopkins-mahowald,tmfbook,behrens:notes-on-tmf,goerss:tmfsurvey}
with well-known homotopy \cite{bauer:tmf}.

\begin{prop}\label{prop:existenceofbeta}
Let $Y$ be a space as in Lemma~\ref{lemma:Yspace}. Then there exists a $2$-torsion class
\[
\beta \in \tmf^{15}(Y)
\]
whose classifying map induces an isomorphism of the order-$2$ groups
\[
H^{15}(\underline{\tmf}_{15}) \to H^{15}(Y).
\]
\end{prop}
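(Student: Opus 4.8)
The plan is to reduce the existence of the class $\beta\in\tmf^{15}(Y)$ to a question in an Adams-type spectral sequence computing $\tmf^*(Y)$, and to resolve that question by a careful low-degree analysis. First I would set up the relevant spectral sequence: since $\tmf/2 = \tmf \wedge S/2$ and $Y$ is a finite complex through the relevant range, the $\st(2)$-based (or, equivalently in low degrees, the $\tmf$-based) Adams spectral sequence converges to $\tmf^*(Y)$ with $E_2 = \Ext_{\st(2)}^{*,*}(H^*(Y),\F_2)$. The class we want to produce is a lift of the generator of $H^{15}(Y)$, i.e.\ an element of Adams filtration $0$ in stem $-15$; equivalently, by duality, we are asking whether the indecomposable generator $x_{-15}$ of $H^{15}(Y)$ survives the spectral sequence. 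The statement that $\beta$ is $2$-torsion and that its mod-$2$ reduction hits the generator of $H^{15}(\underline{\tmf}_{15})\cong\F_2$ is precisely the statement that a permanent cycle exists in this filtration-$0$ position.

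The key computational steps: (1) Identify $H^*(Y)$ as an $\st(2)$-module through degree roughly $24$ — it is the module pictured in diagram~\eqref{eq:joker3} minus the top class, so through degree $15$ it is $\F_2\{1,x_8,x_{12},x_{14},x_{15}\}$ with $\Sq^4 x_8 = x_{12}$, $\Sq^2 x_{12} = x_{14}$, $\Sq^1 x_{14} = x_{15}$ (the ``quadruple Joker'' pattern). (2) Compute $\Ext_{\st(2)}(H^*(Y),\F_2)$ in the relevant bidegrees, comparing with the known $\Ext_{\st(2)}(\F_2,\F_2) = \pi_*\tmf$ chart of \cite{bauer:tmf}; the short exact sequences of $\st(2)$-modules coming from the cell filtration of $Y$ give long exact sequences in $\Ext$ relating this to shifted copies of the $\tmf$ homotopy. (3) Show that the class in filtration $0$, stem $-15$, is not the target of any differential $d_r$ and is itself a $d_r$-cycle — since it sits in filtration $0$, it is automatically a cycle for dimension reasons, so the real content is that nothing in positive filtration in a higher stem can support a differential hitting it; but differentials decrease stem and increase filtration, so there is in fact nothing above it to worry about, and the only possible obstruction to $\beta$ existing would be a differential out of filtration $0$, which would have to land in a lower stem. (4) Rule out any such outgoing differential by examining the $E_2$-page in stems $-14, -13, \dots$ in filtration $\geq 1$; here the structure of $\dbl J(2)$ as an $\st(2)$-module, together with the sparsity of $\pi_*\tmf$ in low degrees, does the work.

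The main obstacle I expect is step (4) together with the bookkeeping in step (2): one must be sure that the generator of $H^{15}(Y)$ genuinely lifts to $\tmf$ and not merely to some intermediate Postnikov stage, which amounts to checking that the class is not killed by a differential whose source is a genuinely $\tmf$-theoretic (as opposed to $H\F_2$-theoretic) phenomenon — i.e.\ one must use real information about $\pi_*\tmf$, specifically the positions of the classes $\eta, \nu, c_4$-type elements and the first few $\tmf$-differentials, rather than just $\st(2)$-cohomology. Concretely, the danger is a hidden relation forcing $x_{15}$ to be $\eta\cdot x_{14}$-torsion in a way that obstructs a torsion-order-exactly-$2$ lift. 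Once one verifies that the relevant corner of the $\tmf$-resolution spectral sequence for $Y$ agrees with the expected ``cell-by-cell'' answer, the Proposition follows: the permanent cycle in filtration $0$, stem $-15$, is the desired $\beta$, its image in $H^{15}(Y)$ is the generator by construction of the edge homomorphism, and it is $2$-torsion because multiplication by $2$ raises Adams filtration and there is nothing in filtration $\geq 1$ in that stem to receive it — or, if there is, one adjusts $\beta$ by lower-filtration corrections, which is why the precise $\Ext$ computation is needed.
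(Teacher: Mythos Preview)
Your overall framework is right: set up the $\st(2)$-based Adams spectral sequence for $\tmf^{-*}(Y)$ and show that the filtration-$0$ class $x_{-15}$ is a permanent cycle. But several things are off, one of them substantively.

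First, the bookkeeping in steps~(3)--(4) is garbled. A filtration-$0$ class is never a \emph{boundary}, but it can certainly support differentials; your phrase ``automatically a cycle for dimension reasons'' says the opposite of what you then (correctly) go on to worry about. And when you do worry about outgoing differentials, you look in the wrong direction: $d_r$ on a class in stem $-15$ lands in stem $-16$, filtration $r$, not in stems $-14,-13,\dots$. So the whole analysis has to take place in the column $t-s=-16$.

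Second, and this is the real gap: you propose to rule out targets using only ``the structure of $\dbl J(2)$ as an $\st(2)$-module together with the sparsity of $\pi_*\tmf$''. That is not enough. In the paper's argument the $h_0$- and $h_1^2$-linearity constraints ($h_0\iota=0$, $h_1^2\iota=0$) dispose of the $d_2$, but at $E_2$ there \emph{are} classes in bidegree $(-16,3)$ that would be legitimate $d_3$-targets. What removes them is a $d_2$ elsewhere in the chart, namely $d_2(x_{-16}) = x_{-20}h_0h_2$, and this differential is \emph{not} visible from the $\st(2)$-module structure of $H^*(Y)$ alone. It comes from a separate lemma (the paper's Lemma on $Y_{16}^{20}$) which identifies the attaching map of the $20$-cell to the $16$-cell as $\pm 2\nu$, via a multiplicativity argument in the Atiyah--Hirzebruch spectral sequence using that $x_{16}=x_8^2$ and $\Sq^4 x_8=x_{12}$. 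Only after importing that genuinely homotopy-theoretic input does the $E_3$-page at $(-16,3)$ reduce to classes on which $h_0$ acts nontrivially, whence $d_3(\iota)=0$ and the higher $d_r$ vanish for the same reason. Your outline never isolates this step, and without it the argument does not close.
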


\begin{proof}[Proof of Thm.~\ref{thm:main}]
Given Prop.~\ref{prop:existenceofbeta} and the Bockstein spectral sequence, the class $\beta$ has to pull back to a class $\alpha \in (\tmf/2)^{14}(Y)$ whose classifying map induces an isomorphism in $H^{14}$. This means that under
$\alpha^*\colon H^*(\underline{\smash{\tmf/2}}_{14}) \to H^*(Y)$, the unit $\iota \in H^{14}(\underline{\smash{\tmf/2}}_{14})$ is mapped to $x_{14}$.

A basic property of $\tmf$ is that
$H^*(\tmf) \cong \st \otimes_{\st(2)} \F_2$
and so there is a non-split extension of
$\mathcal{A}$-modules
\[
0\to H^*(\tmf)\to H^*(\tmf/2)\to \Sigma H^*(\tmf) \to 0
\]
where $Sq^1$ acts non-trivially on the generator of
$\Sigma H^0(\tmf)$.

This implies that
$\alpha^*(\Sq^1 \iota) = x_{15}$,
$\alpha^*(\Sq^8\iota) = x_8x_{14}$, and
$\alpha^*(\Sq^8\Sq^1\iota) = x_8x_{15}$.
Hence as in the case of $\dbl J(1)$,
\[
H^*(Y) \cong
H^*(\underline{\smash{\tmf/2}}_{14})
[x_8,x_{12},\text{generators in degree greater than $24$}]/(x_8^3,R),
\]
where the module $R$ of relations is at least
$25$-connected. Thus the Eilenberg-Moore spectral
sequence converging to $\hofib(\alpha)$ shows that
up to degree $24$,
\[
H^*(\hofib(\alpha)) \cong \F_2[x_8,x_{12}]/(x_8^3),
\]
The $24$-skeleton of $\hofib(\alpha)$ therefore optimally realizes $\dbl J(2)$.
\end{proof}

It remains to prove Prop.~\ref{prop:existenceofbeta}.

Let $Y_m^n = Y^{(n)}/Y^{(m-1)}$ denote the $n$-skeleton of $Y$ modulo
the $(m-1)$-skeleton, thus containing the cells from dimension $m$ to
dimension~$n$.

\begin{lemma}\label{lemma:basicd2}
Let $Y$ be a space as in Lemma~\ref{lemma:Yspace}. Then the space $Y_{16}^{20}$ is homotopy equivalent to a suspension of the
cone of $\pm 2\nu$. In particular, in the Adams spectral sequence
\[
\Ext_{\st(2)}(\F_2,H^{-*}(Y_{16}^{20}) \Longrightarrow \tmf^{-*}(Y_{16}^{20}),
\]
there is a differential $d^2(x_{-16}) = x_{-20} h_0 h_2$, where $x_{-16}$,
$x_{-20}$ in $\Ext^0$ are the classes corresponding to the two cells.
\end{lemma}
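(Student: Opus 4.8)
The plan is to identify $Y_{16}^{20}$ concretely as a two-cell complex and then read off the Adams differential from the known homotopy of $\tmf$. First I would note that from the description of $H^*(Y)$ in Lemma~\ref{lemma:Yspace}, the cells of $Y$ in dimensions $16$ through $20$ are exactly the classes $x_8^2 \in H^{16}(Y)$ and $x_8 x_{12} \in H^{20}(Y)$, and there are no cells in dimensions $17$, $18$, $19$. Hence $Y_{16}^{20}$ is a CW complex with one cell in dimension $16$ and one in dimension $20$, i.e. $S^{16} \cup_f e^{20}$ for some attaching map $f \in \pi_{19}(S^{16}) = \pi_3^s = \Z/24\{\nu\}$. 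The class of $f$ is detected by the secondary operation, or more simply by the $\st(2)$-module structure: in $H^*(Y)$ we have $\Sq^4 x_8^2 = 0$ (since $\Sq^4$ on a square is a square, $\Sq^4(x_8)^2 = (\Sq^2 x_8)^2 = 0$ as $\Sq^2 x_8=0$), and similarly all the primary operations from $x_8^2$ into the relevant range vanish modulo decomposables that don't survive to this subquotient; this forces $f$ to be divisible by $2$, so $f = \pm 2\nu$ (it cannot be $0$ since $Y_{16}^{20}$ must be attached nontrivially — one can see $x_8^3 = 0$ but $x_8^2 \cdot x_8 \neq 0$ is killed precisely by the relation, which is compatible with, and in fact forces, the $2\nu$-attachment via the way $B\DW_3$'s cell structure degenerates after coning off $x_8^3$). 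So $Y_{16}^{20} \simeq \Sigma^{16}(C(\pm 2\nu))$, the cone on $\pm 2\nu \colon S^3 \to S^0$, desuspended/suspended appropriately.

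Next I would invoke the structure of $\Ext_{\st(2)}(\F_2,\F_2)$, equivalently the $\tmf$-based (or $\tmf$-Hurewicz) Adams spectral sequence. For the sphere, $2\nu \in \pi_3\tmf$ is detected in Adams filtration $2$ by $h_0 h_2 \in \Ext^{2,5}_{\st(2)}(\F_2,\F_2)$, and the relation $h_0 h_2 \neq 0$ in $\Ext_{\st(2)}$ (whereas $h_0 h_2 = 0$ in $\Ext_{\st}$) is exactly what makes $2\nu$ visible to $\tmf$. For the two-cell complex $Y_{16}^{20}$, the cohomology $H^{-*}(Y_{16}^{20})$ is, as an $\st(2)$-module, the extension of the bottom cell class $x_{-16}$ by the top cell class $x_{-20}$ recording that attaching map; dually this is the Spanier–Whitehead dual of $C(\pm 2\nu)$ shifted into negative degrees. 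The $E_2$-page $\Ext^0_{\st(2)}(\F_2, H^{-*}) $ contains the two classes $x_{-16}$, $x_{-20}$ (the duals of the two cells, in filtration $0$). The attaching map $\pm 2\nu$ produces, in the long exact sequence relating the Adams spectral sequences of $S^{16}$, $S^{20}$, and $Y_{16}^{20}$, a $d_2$ connecting $x_{-16}$ to $x_{-20}$ multiplied by the Adams-page element detecting $2\nu$, namely $h_0 h_2$. Concretely: the boundary map in the cofiber sequence sends the permanent cycle $x_{-20}$ on the top-cell spectral sequence to $h_0 h_2 \cdot x_{-16}$ on the $E_2$-page, which is the statement $d_2(x_{-16}) = x_{-20} h_0 h_2$ after transporting along the cofiber sequence (signs and the direction of the connecting map depending on the conventions, and harmless here since we only claim the differential up to the ambiguity $\pm 2\nu$).

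The main obstacle I anticipate is not the $\tmf$-homotopy input — $\pi_3 \tmf = \Z/24$ with $2\nu$ of filtration $2$ detected by $h_0h_2$ is completely standard from \cite{bauer:tmf} — but rather pinning down that the attaching map of $Y_{16}^{20}$ really is $\pm 2\nu$ and not, say, $\nu$ or $0$. This requires a careful look at the cell structure of the $24$-skeleton of $B\DW_3$ after applying Lemma~\ref{lemma:topdegreecorrection}: one must check that the $\st$-module (equivalently $\st(2)$-module) structure on $H^*(B\DW_3)$ restricted to the subquotient spanned by $x_8^2$ and $x_8 x_{12}$ forces divisibility by $2$ of the attaching map (primary operations vanish) but nontriviality (the secondary/tertiary structure, i.e. the Massey product detecting $\nu^2$ or the relation $x_8^2 \cdot x_8 = 0$ in the cohomology ring together with $x_8 x_{12} \neq 0$, obstructs the attaching map being null). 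Once that identification is secure, the differential is forced. I would present the argument by first stating the cell-structure identification as the crux, then deducing the differential formally from the cofiber sequence $S^{16} \to Y_{16}^{20} \to S^{20}$ and naturality of the $\tmf$-Adams spectral sequence.
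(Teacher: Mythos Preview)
Your reduction to a two-cell complex and your deduction of the Adams $d_2$ from an attaching map of Adams filtration~$2$ are both fine. The gap is in the middle: you have not actually shown that the attaching map is $\pm 2\nu$ (or even an odd multiple of $2\nu$). Vanishing of $\Sq^4$ on $x_8^2$ only tells you the map lies in $2\pi_3^s = \{0,2\nu,4\nu,\dots,22\nu\}$; your appeal to ``$x_8^3=0$ but $x_8^2\cdot x_8\neq 0$'' does not parse (these are the same element, and it is zero in $H^*(Y)$ by construction), and the remark about Massey products detecting $\nu^2$ is in the wrong stem. Nothing you have written excludes $f=0$ or $f=4\nu$, and either of those would kill the claimed $d_2$.

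The paper's idea, which you are missing, is \emph{not} to work in the subquotient $Y_{16}^{20}$ at all, but in the full skeleton $Y^{20}$, where the ring structure on cohomology is still available. In the (multiplicative) Atiyah--Hirzebruch spectral sequence for $\tmf^{-*}(Y^{20})$, the relation $\Sq^4 x_8 = x_{12}$ gives $d^4(x_{-8}) = x_{-12}\nu \pmod{2\nu}$; the Leibniz rule then forces $d^4(x_{-8}^2) = 2\,x_{-8}x_{-12}\,\nu \pmod{4\nu}$. In other words, the fact that the $16$-cell is the \emph{square} of the $8$-cell is precisely what produces the factor of~$2$ and pins the attaching map down to $2\nu \pmod{4\nu}$, hence to Adams filtration exactly~$2$. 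Passing to $Y_{16}^{20}$ first, as you do, throws away the only structure that can see this.
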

Here the grading is chosen such that the spectral sequence becomes
a homological spectral sequence and we will display it in the Adams
grading.
\begin{proof}
Consider the space $Y^{20}$, the $20$-skeleton of $Y$. In the Atiyah-Hirzebruch spectral sequence
\[
H^{-*}(Y^{20},\tmf^{-*}) \cong H_*(D(Y^{20}),\tmf_*) \Longrightarrow \tmf^{-*}(Y^{20}),
\]
the cohomology generators $x_8$, $x_{12}$ represent classes $x_{-8}, x_{-12} \in H_*(DY^{20},\tmf_0)$ and, since $\Sq^4(x_8)=x_{12}$, there is a differential $d^4(x_{-8}) = x_{-12}\nu \pmod {2\nu}$. By multiplicativity, $d^4(x_{-8}^2) = 2x_{-8}x_{-12} \nu \pmod {4\nu}$. This shows that the top cell of $Y^{20}$ is attached to the $16$-dimensional cell by $2\nu \pm 4{\nu} = \pm 2 \nu$.
\end{proof}

\begin{proof}[Proof of Prop.~\ref{prop:existenceofbeta}]
The claim boils
down to showing that in the Adams spectral sequence
\[
E_2^{s,t} = \Ext_{\st}(H^{-*}(\tmf),H^{-*}(Y))
\cong \Ext_{\st(2)}(\F_2,H^{-*}(Y))
\Longrightarrow \tmf^{-*}(Y),
\]
the unique nontrivial class
\[
x_{-15} \in E_2^{0,-15}
= \Hom_{\st}(H^0(\tmf),H^{15}(Y))
\]
is an infinite cycle.

As modules over $\st(2)$, 
\[
H^*(Y) \cong H^*(Y_0^{15}) \oplus H^*(Y_{16}^{24}),
\]
hence the $E_2$-term above splits as a sum as well.
The following is the $E_2$-term of the Adams spectral 
sequence converging to  $\tmf^{-*}(Y^{(15)})$, 
determined with Bob Bruner's program~\cite{bruner:extsoftware}:
\[
\includegraphics[clip,trim=4.5cm 19.5cm 8cm 4cm]{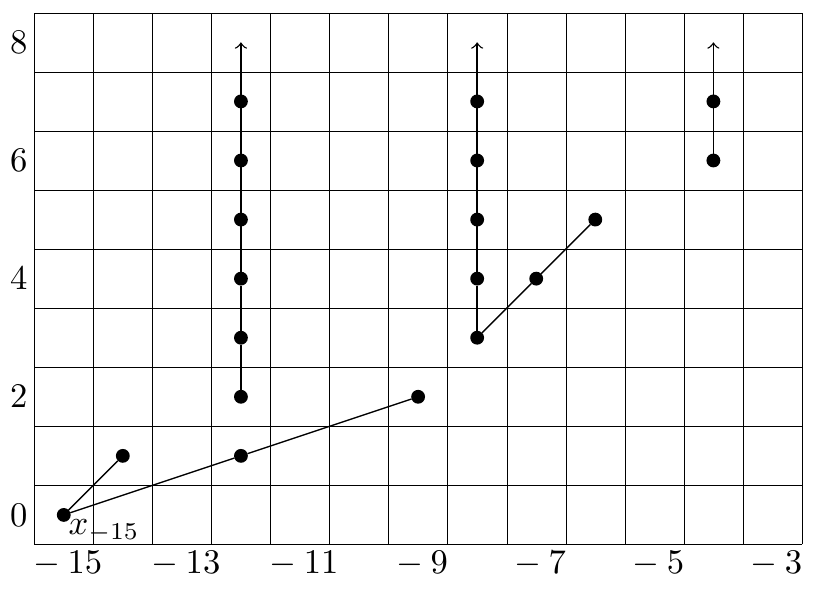}
\]
Similarly, the $E_2$-term of the spectral sequence 
computing $\tmf^*(Y_{16}^{24})$ is the following.
\[
\includegraphics[clip,trim=4.5cm 19.5cm 8cm 4cm]{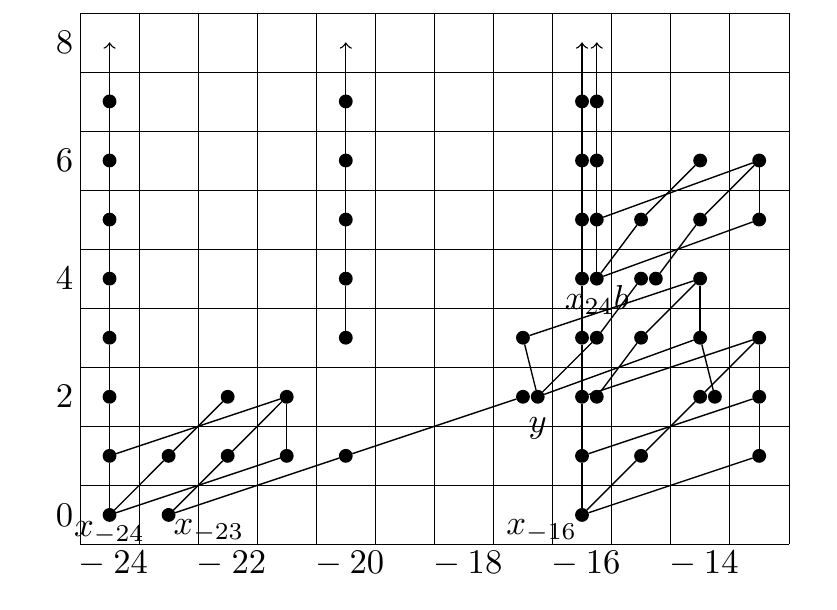}
\]
We will identify the possible targets of differentials on
$\iota$. Since $h_0\iota = 0$ and $h_1^2\iota=0$, only
classes in the kernel of $h_0$ and the kernel of $h_1^2$
can be targets. This means that there is no possible target
for a $d_2$ in bidegree $(-16,2)$ in the displayed Adams
grading.

It is easy to see that under the inclusion
$Y_{16}^{20} \to Y_{16}^{24}$, the class $x_{-20}h_0h_2$
maps to the displayed class $y$. By Lemma~\ref{lemma:basicd2},
there is thus a differential $d_2(x_{-16}) = y$. This
implies that the $E_3$-term for $Y_{16}^{24}$ is given
by the following chart.
\[
\includegraphics[clip,trim=4.5cm 19.5cm 8cm 4cm]{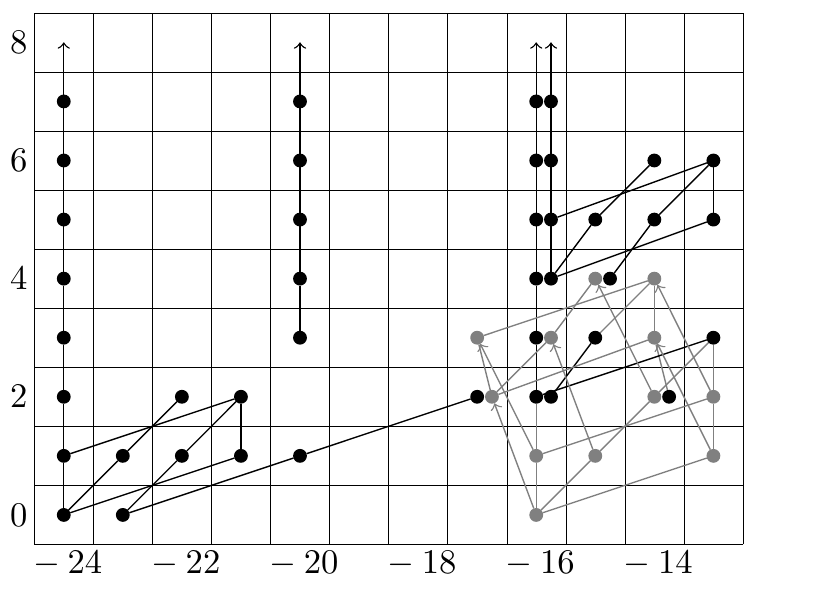}
\]
The only remaining possible target of a $d_3$ in bidegree
$(-16,3)$ is $x_{-16}h_0^3$, which is impossible for
the same reason as before ($h_0$ on it is nontrivial.)

There are no longer differentials possible on $\iota$
either because no classes in filtration $4$ or higher
are $h_0$-torsion in any $E_n$-term.
\end{proof}

\bibliographystyle{amsalpha}
\bibliography{bibliography}
\end{document}